\newtheorem{theorem}{Theorem}
\newtheorem{definition}{Definition}
\newcommand{\revised}[1]{{\color{black}{#1}}}
\def\blfootnote{\xdef\@thefnmark{}\@footnotetext}
\def\ps@pprintTitle{%
  \let\@oddhead\@empty
  \let\@evenhead\@empty
  \let\@oddfoot\@empty
  \let\@evenfoot\@oddfoot
}
\title{Mathematical Programming Formulations for the Collapsed k-Core Problem\blfootnote{\textcopyright 2023. This manuscript version is made available under the CC-BY-NC-ND 4.0 license \url{https://creativecommons.org/licenses/by-nc-nd/4.0/}. Accepted for publication in European Journal of Operational Research; doi: \textcolor{cyan}{\textbf{10.1016/j.ejor.2023.04.038}}\\ \hspace*{0.6cm}\textit{Email addresses:} \texttt{cerulli@essec.edu} (M. Cerulli), \texttt{dserra@unisa.it} (D. Serra), \texttt{csorgente@unisa.it} (C. Sorgente), \texttt{archetti@essec.edu} (C. Archetti), \texttt{ivana.ljubic@essec.edu} (I. Ljubi\'c)}}
\date{}
\author[1]{Martina Cerulli}
\author[2]{Domenico Serra}
\author[2]{Carmine Sorgente}
\author[1]{Claudia Archetti}
\author[1]{Ivana Ljubi\'c}
	\affil[1]{ESSEC Business School, Cergy-Pontoise, 95021, France}
	\affil[2]{University of Salerno, Fisciano, 84084, Italy}
\begin{document}
\maketitle
\begin{abstract}
In social network analysis, the size of the $k$-core, i.e., the maximal induced subgraph of the network with minimum degree at least $k$, is frequently adopted as a typical metric to evaluate the cohesiveness of a community.
  We address the Collapsed $k$-Core Problem, which seeks to find a subset of $b$ users, namely the most critical users of the network, the removal of which results in the smallest possible $k$-core.
  For the first time, both the problem of finding the $k$-core of a network and the Collapsed $k$-Core Problem are formulated using mathematical programming.
  On the one hand, we model the Collapsed $k$-Core Problem as a natural deletion-round-indexed Integer Linear formulation. On the other hand, we provide two bilevel programs for the problem, which differ in the way in which the $k$-core identification problem is formulated at the lower level. The first bilevel formulation is reformulated as a single-level sparse model, exploiting a Benders-like decomposition approach.
 To derive the second bilevel model, we provide a linear formulation for finding the $k$-core and use it to state the lower-level problem. We then dualize the lower level and obtain a compact Mixed-Integer Nonlinear single-level problem reformulation.
  We additionally derive a combinatorial lower bound on the value of the optimal solution and describe some pre-processing procedures, and valid inequalities for the three formulations.
  The performance of the proposed formulations is compared on a set of benchmarking instances with the existing state-of-the-art solver for mixed-integer bilevel problems proposed in \citep{ljubic2017}.

\end{abstract}

\section{Introduction}
In recent years, with the advent of social networks era, studying the behaviour of the users in a network has gained increasing interest. Particularly important are the so-called critical users, i.e., the ones who have a large number of connections with other users and whose \revised{departure} from the network might potentially cause the exit of many other users. Indeed, a property of social networks is that the decision of each user (leaving or remaining in the network) is influenced by that of her connected friends. A popular assumption is that a user remains in the network if she has at least a certain number of connections, say $k$, in the network \citep{bhawalkar2015}. On the contrary, a user is driven to leave the social network if she has less than $k$ connections. Given this assumption, if a user leaves the network, the degree of her neighbors decreases by one, eventually becoming smaller than $k$ for some of them.
Thus, a cascade phenomenon is observed each time a user drops out from the network, until a \textit{stable} configuration is obtained, which corresponds to the $k$-core of the social network graph. 

In this context, we study the Collapsed $k$-Core Problem, which has been introduced in \citep{zhang2017} to identify the critical users to be eventually incentivized not to leave the network (or, from an adversarial point of view, to leave it). This problem indeed consists in finding the set of a given number $b$ of users, whose exit from the network minimizes the number of the remaining users in the network itself, i.e., leads to the minimal $k$-core.
First of all, we propose a formulation of the problem modeling the cascade effect which determines the withdrawal of a certain number of users from the network. In such approach, a time index is needed to represent the subsequent deletion rounds of this process.
Beyond that, the tools of bilevel optimization have been recently used for developing exact methods for several critical node/edge detection problems \citep{Furini-et-al2019,Furini-et-al2020,Furini-et-al2020b,Furini-et-al2022}. These works show that novel and computationally effective mathematical programming formulations can be derived thanks to the bilevel interpretation of the problems. Motivated by these studies, we use bilevel programming to model the Collapsed $k$-Core Problem, discarding the time index. A bilevel program is an optimization problem where one problem is nested into another \citep{vicente1994,colson,dempe2002,cerul2021,ivana_survey}. The formulation of a classical bilevel problem reads 
\begin{align*}\label{eq:bilP}
  \min\limits_{x \in \mathcal{X}, y} &\: F(x,y) \\
  \text{s.t.} &\; G(x,y) \geq 0 \tag{\mbox{$\mathsf{P}$}}\\
  & y \in \arg\min\limits_{y' \in \mathcal{Y}} \{ f(x,y')~|~g(x,y') \geq 0\}
\end{align*}
The outer optimization problem in the variables $x$ and $y$ is the so-called upper-level problem. The inner optimization problem in the variable $y'$, parameterized with respect to the upper-level variables $x$, is the so-called lower-level problem. In formulation \eqref{eq:bilP}, we implicitly assume that the lower-level problem has only one optimal solution for each value of $x$. If this is not the case, this formulation is the one obtained with the so-called \textit{optimistic} approach \citep{dempe2002}, which consists in selecting the lower-level optimal solution corresponding to the best outcome for the upper level that minimizes it. The whole bilevel problem can be seen as a hierarchical decision process: in the upper level, a \emph{leader} makes a decision while anticipating the optimal reaction of the lower-level decision maker, the \emph{follower}, whose decision depends on the decision of the leader. Another way to formulate problem $(\mathsf{P})$ is
\begin{align*}
  \min\limits_{x \in \mathcal{X}, y} &\; F(x,y) \\
  \text{s.t.} &\; G(x,y) \geq 0, \quad g(x,y) \geq 0 \\
  &\; f(x,y) \leq \varphi(x),
\end{align*}
where $\varphi(x) = \min\limits_{y' \in \mathcal{Y}} \{ f(x,y')~|~g(x,y') \geq 0\}$ is the so called \textit{value function} of the lower level.

In the Collapsed $k$-Core Problem, the hierarchical structure can be described as follows. We can see the follower as an entity who is computing the collapsed $k$-core resulting after the decision of the leader on the $b$ nodes to interdict from the network. So, the follower aims at identifying the subgraph of maximum cardinality, resulting from the interdiction of the $b$ nodes selected by the leader, satisfying the property that each node of the subgraph has at least $k$ neighbors. The leader instead aims at detecting the set of $b$ nodes for which the cardinality of the associated subgraph is minimized, which corresponds to the set of the most critical users in the network. 

\paragraph{Contributions}
It is well-known that the problem of finding the $k$-core of a graph, denoted as $k$-Core Detection Problem in the following, can be solved in polynomial time (see \cite{batagelj2003}), and one can easily model the problem using binary variables. However, to the best of our knowledge, prior to the current work, no {Integer Linear Programming (ILP)} or Linear Programming (LP) formulation (and, thus, a polynomial approach based on LP) for the $k$-Core Detection Problem was known. In this work we provide a first compact LP formulation for calculating the $k$-core. In addition, as far as we know, no mathematical programming formulations for the Collapsed $k$-Core Problem \citep{zhang2017} have ever been investigated in the existing literature. We provide three integer programming formulations, and propose to enhance them with a combinatorial lower bound and valid inequalities. The first formulation is a compact time-expanded model that mimics the iterative node ``collapsing'' process. The two other models are based on the bilevel interpretation of the problem. The first is a sparse formulation that projects out the lower-level variables and exploits a Benders-like decomposition. The second exploits the LP-based formulation of the $k$-Core Detection Problem, and the LP-duality. All the three formulations are implemented and computationally evaluated against the state-of-the-art bilevel solver from \citet{ljubic2017}.
The obtained results demonstrate the efficiency of the proposed approaches, with the model using the LP-duality exhibiting the best performances, both in terms of computing times and gaps at the termination.

The rest of the paper is organized as follows. In Section~\ref{sec:literature}, we review the main literature in the field. In Section~\ref{sec:notation} we introduce the needed definitions and notations. In Section~\ref{sec:kcore}, we present two mathematical programming formulations of the $k$-Core Detection Problem. In Section~\ref{sec:math-formulations}, we introduce different mathematical programming formulations for the Collapsed $k$-Core Problem: first a time-dependent model, then two bilevel programs, which mainly differ in the lower level, where the two formulations proposed in Section~\ref{sec:kcore} are used. In Section~\ref{sec:valid_ineq}, we describe pre-processing procedures, as well as valid inequalities that strengthen the proposed formulations. In Section~\ref{sec:separation-procedures}, we describe how to separate the valid inequalities which are exponential in number. Section~\ref{sec:numerical-experiments} is devoted to the numerical experiments, and Section~\ref{sec:conclusion} concludes the paper.

\vspace*{-2mm}
\section{Literature Review}\label{sec:literature}
We consider the following definition of the $k$-core:
\begin{definition}\label{def:kcore}
Given an undirected graph $\mathcal{G}=(V,E)$, and a positive integer $k$, the $k$-core of $\mathcal{G}$ is the maximal 
induced subgraph of $\mathcal{G}$ in which all the nodes have degree at least $k$. \end{definition} 
The $k$-core may be calculated as the resulting graph obtained by iteratively deleting from $\mathcal{G}$ all the nodes that have degree less than $k$, in any order. This procedure is known as the $k$-core decomposition \citep{sariyuce2016}, the basic theory of which is surveyed in~\citep{malliaros2020}. 
We point out that the original definition of the $k$-core given by \citet{Seidman1983NetworkSA} requires the connectedness of the $k$-core. In \citep{matula1983}, this original definition is used, and an algorithm, named Level Component Priority Search, is introduced for finding all the maximal connected components of a graph with degree at least $k$. However, in most of the recent papers in the field, the connectedness requirement is relaxed, starting from \citet{batagelj2003}, where an algorithm with time complexity $O(|E|)$ for core decomposition is proposed. This is why in Definition~\ref{def:kcore} we also assume that the $k$-core may contain multiple connected components.

The \textit{Anchored $k$-Core Problem} is studied by \citet{bhawalkar2015}. It consists in anchoring $b$ nodes (nodes that remain engaged no matter what their friends do) to maximize the size of the corresponding anchored $k$-core, i.e., the maximal subgraph in which every non-anchored node has degree at least $k$. It may be useful to incentivize key individuals to stay engaged within the network, preventing the cascade effect. The Anchored $k$-Core Problem is solvable in polynomial time for $k\leq 2$, but is \NP-hard for $k > 2$ \citep{bhawalkar2015}. In \citep{zhang2017OLAK} a greedy algorithm, called
OLAK, is proposed to solve this problem, while in \citep{laishram2020residual} the so-called Residual Core Maximization heuristic is introduced.
Another paper focusing on the maximization of the $k$-core is \citep{chitnis2013}, where improved hardness results on the Anchored $k$-Core Problem are given. 

From an antagonistic perspective, a natural question associated with the Anchored $k$-Core Problem is how to maximally collapse the engagement of the network by incentivizing the $b$ most critical users to leave. This problem is called the \textit{Collapsed $k$-Core Problem}, and was introduced in \citep{zhang2017}. The aim is to find the set of $b$ nodes the deletion of which leads to the smallest $k$-core (i.e., the $k$-core with minimum cardinality), obtained from $\mathcal{G}$ by iteratively removing nodes with degree strictly lower than $k$.
In \citep{zhang2017}, it is shown that the Collapsed $k$-Core Problem is \NP-hard for any $k\geq 1,$ and a greedy algorithm to compute feasible solutions for the problem is proposed. In \citep{luo2021parameterized}, the parameterized complexity of the Collapsed $k$-Core Problem is studied with respect to the parameters $b$, $k$, as well as another parameter, say $\gamma$, which represents the maximum allowed cardinality of the remaining $k$-core (i.e., it should not have more than $\gamma$ number of nodes).
A further study on the $k$-core minimization was conducted in \citep{zhu2018}, where the focus was on edges instead of nodes. Indeed, the aim of the problem in this case is to identify a set of $b$ edges, so that the minimal $k$-core is obtained by deleting these edges from the graph. This problem is proven to be \NP-hard, and a baseline greedy algorithm is proposed.

In \citep{shuo2019}, the Collapsed $k$-Core Problem is applied to determine the key scholars who should be engaged in various tasks in science and technology management, such as experts finding, collaborator recommendation, and so on. More recently, in \citep{zhang2020}, a related problem is studied: the Collapsed $(\alpha, k)$-\NP-community in signed graph. Given a signed graph, which is a graph where each edge has either a ``$+$'' sign if representing a friendship or a ``$-$'' sign if representing an enemy relationship, the $(\alpha, k)$-\NP-community is made of the users having not less than $\alpha$ friends and no more than $k$ enemies in the community. This problem aims at finding the set of nodes the removal of which from the graph will lead to the minimum cardinality of the remaining $(\alpha, k)$-\NP-community.

\section{Notations}\label{sec:notation}
Let us consider an undirected graph $\mathcal{G}=(V,E)$, with $n=|V|$, and two positive integers $k$ and $b$. Denote by $N(i)$ the set of neighbors of node $i$ in graph $\mathcal{G},$ i.e., the set $\{j \in V : \{i,j\} \in E \}$. Furthermore, let $\delta(\mathcal{G})$ be the minimum degree of the graph $\mathcal{G}$, which is the minimum of its nodes' degrees. For a given set of nodes $S \subseteq V$, let $\mathcal{G}[S]$ denote the subgraph of $\mathcal{G}$ induced by $S$, and thus $\delta(\mathcal{G}[S])$ its minimum degree. Moreover, let $\mathcal{G}\backslash S$ denote the subgraph of $\mathcal{G}$ induced by $V\backslash S$, i.e., $\mathcal{G}[V\backslash S]$. 
Let $C_k(\mathcal{G})$ be the $k$-core of graph $\mathcal{G}$, i.e., the maximum cardinality subgraph in which every node has a degree at least $k$. In the following, we will refer to the number of nodes in a given $k$-core as its \textit{size} or \textit{cardinality}.
\revised{Let us further define the \textit{$k$-subcores} of $\mathcal{G}$ as the induced subgraphs of $\mathcal{G}$ in which all the nodes have degree at least $k$. From this definition, it is clear that the $k$-core of $\mathcal{G}$ coincides with its $k$-subcore of largest cardinality and, while there may exist several $k$-subcores,  the $k$-core of $\mathcal{G}$ is always unique.}
Finally, we say that a node $v$ of $\mathcal{G}$ has coreness (or core number) $k$ if it belongs to a $k$-subcore, but not to any $(k+1)$-subcore.
\vspace*{-0.2cm}
\begin{figure}[H]
  \centering
  \includegraphics[width=.4\textwidth]{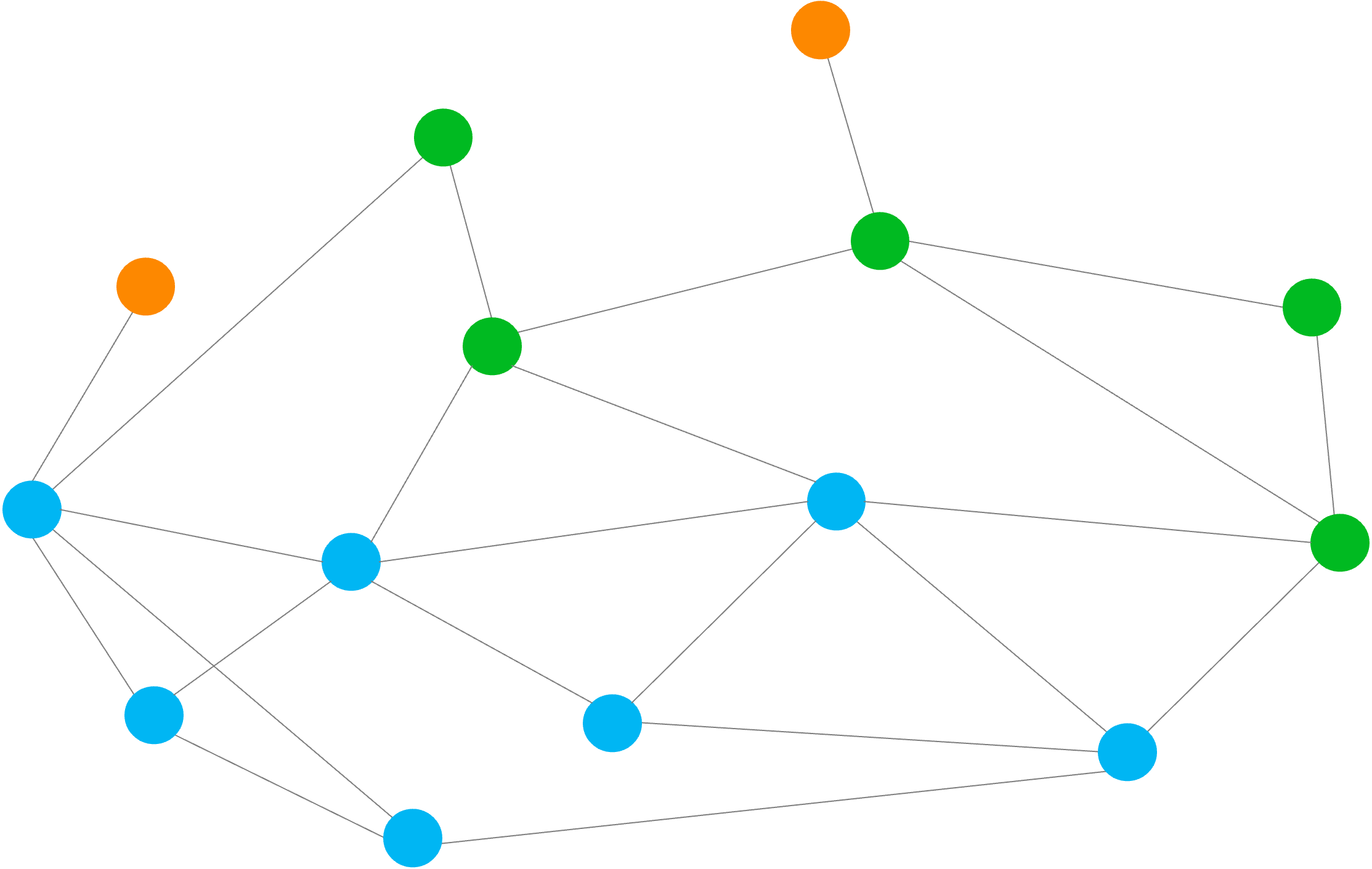}\vspace*{-2mm}
  \caption{\textbf{Example of coreness distribution.}}
  \label{fig:example_coreness}
\end{figure}

In Figure~\ref{fig:example_coreness}, a graph is shown and each node is colored according to its core number. In particular, the two orange nodes have coreness $1$, meaning that they do not belong to any $k$-subcore with $k \ge 2$; the five green nodes, instead, have coreness two, since $2$ is the maximum value of $k$ such that there exist a $k$-subcore including any of them; finally, the remaining seven cyan nodes have coreness $3$, indeed they together form a $3$-subcore.

\revised{A detailed list of the notations used in the paper can be found in~\ref{appendix1}.}
\section{Mathematical formulations for the $k$-Core Detection Problem}\label{sec:kcore}
In this section we propose two formulations for the $k$-Core Detection Problem. The first one is an ILP model defined in the space of binary variables associated with the set of nodes. The second one is a compact, extended formulation which models the subgraph induced by the set of nodes that are outside the $k$-core. In this second model, which uses node and edge variables, the integrality constraints can be relaxed, and hence LP formulation can be obtained.

\subsection{ILP Formulation}\label{subsec:kcore1}
The binary variables involved in the first formulation are defined as:
\begin{equation}\label{eq:kcore1-var}
		y_i = \begin{cases}
			1 &\text{if node $i$ belongs to the $k$-core}\\
			0 &\text{otherwise}
		\end{cases} \quad i \in V.
\end{equation}
Let
\begin{equation}\label{eq:kcore1Y}
		\mathcal{Y} = \left\{ y \in \{0,1\}^n : k \revised{\cdot} y_i \le \sum_{j \in N(i)} y_j,\, \forall\,i \in V \right\} 
\end{equation} 
be the set of incident vectors of any subgraph in $\mathcal{G}$ the nodes of which have degree at least $k$ \revised{(i.e., any $k$-subcore of $\mathcal{G}$)}. With each $ \tilde y \in \mathcal{Y}$ we can associate a subset of nodes $K = \{i \in V: \tilde y_i = 1\}$, and with the whole set $\mathcal{Y}$ of $y$ vectors, we can associate the set $\mathcal{K}\, \revised{ = \{K \subseteq V ~:~ \delta(\mathcal{G}[K]) \geq k\}}$.
For a given $K\in \mathcal{K}$, any node $i\in K$ has a degree at least $k$ in the induced subgraph $\mathcal{G}[K]$, i.e., $\delta(\mathcal{G}[K])\ge k$. Note that, in this case, by definition, $\mathcal{G}[K]$ is the $k$-core of $\mathcal{G}[K]$ itself. In Section~\ref{sec:notation}, we defined such subgraphs $\mathcal{G}[K]$, for each $K \in \mathcal{K}$, as \textit{$k$-subcores} of $\mathcal{G}.$ 

The $k$-Core Detection Problem corresponds to finding the $k$-core of graph $\mathcal{G}$, and can be modeled as the following integer program: 
\begin{equation}\label{eq:kcore1}
  \max_{y \in \mathcal{Y}}\sum_{i \in V} y_i.
\end{equation}
Note that constraints $k \revised{\cdot} y_i \le \sum_{j \in N(i)} y_j$ (in Eq.~\eqref{eq:kcore1Y}) ensure, when maximizing $\sum_{i \in V} y_i$, that no node with induced degree lower than $k$ is selected in the $k$-core.

\begin{figure}[H]
  \centering
  \includegraphics[width=.4\textwidth]{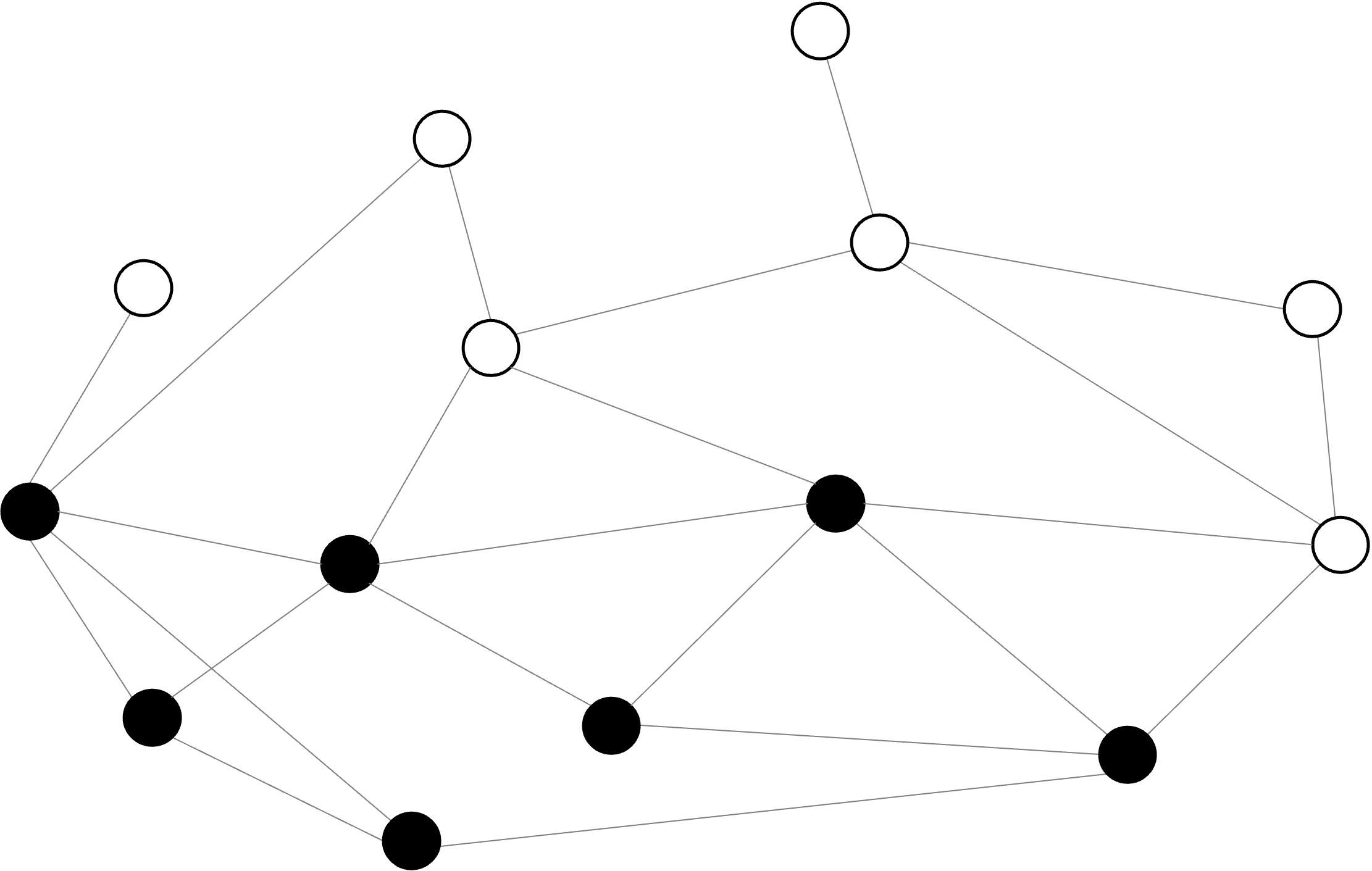}
  \caption{Nodes in the $3$-core of graph in Figure~\ref{fig:example_coreness}.}
  \label{fig:kcore1}
\end{figure}

In Figure~\ref{fig:kcore1}, using the graph in Figure~\ref{fig:example_coreness} with $k=3$, we report in black the nodes for which the variables $y_i$ of formulation~\eqref{eq:kcore1} are $1$, i.e., the nodes belonging to the $3$-core of the graph.

\subsection{LP Formulation}\label{subsec:kcore2}
In this section, we propose an alternative formulation of the $k$-Core Detection Problem by
focusing on the subgraph induced by the subset of nodes which are outside the $k$-core.
Let us define the variables $u_i$ which identify the nodes \textbf{outside} of the $k$-core of the graph:
\begin{equation}\label{eq:kcore2-var}
	u_i = \begin{cases}
		1 &\text{if node $i$ does not belong to the $k$-core}\\
		0 &\text{otherwise}\end{cases} \quad i \in V.
\end{equation}
Note that that variable $u_i$ corresponds to $1-y_i$ in formulation~\eqref{eq:kcore1}. 
Expressed in the space of $u$ variables, \revised{an alternative formulation for the $k$-Core Detection Problem with respect to formulation~\eqref{eq:kcore1} reads:} 

\begin{subequations}\label{eq:kcore2}
\begin{align}
  \max_{u} &\; n - \sum_{i \in V} u_i & \label{eq:kcore2-obj}\\
  \text{s.t.} & \; \sum_{j \in N(i)} u_j + k - |N(i)| \leq k\revised{\cdot}u_i &\forall\,i \in V \label{eq:kcore2-constr}\\ 
  & \; u \in \{0,1\}^{n} & 
\end{align}
\end{subequations}

Indeed, we want to find the $k$-subcore of maximum cardinality, which means that, in the objective function~\eqref{eq:kcore2-obj}, we maximize the difference between $n$ and the sum of $u_i$, that is the number of vertices which are outside the $k$-core. Constraints~\eqref{eq:kcore2-constr} imply that $u_i$ is $1$ (i.e., node $i$ is in not in the $k$-core) iff the difference between its degree and the number of neighbors not in the $k$-core (where this difference corresponds to the number of its neighbors in the $k$-core) is less than $k$. 

Following the idea proposed in \citep[Lemma~1]{gillen}, we can modify the right-hand side of constraints~\eqref{eq:kcore2-constr} as:
\begin{equation}\label{eq:kcore2-constrnew}
  \sum_{j \in N(i)} u_j + k - |N(i)| \leq \Big(\sum_{j \in N(i)} u_j + k -|N(i)| \Big) u_i.
\end{equation}
If $|N(i)| - \sum_{j \in N(i)} u_j < k$ (i.e., the difference between degree of node $i$ and the number of its neighbors not in the $k$-core is less than $k$), $u_i$ will be set to 1. Otherwise, if $|N(i)| - \sum_{j \in N(i)} u_j \geq k$, $u_i$ can be set either to $0$ or $1$, but, since we are minimizing the sum of all $u_i$, it will be set to $0$. 
The resulting formulation is bilinear, and, thus, possibly difficult to solve, due to the presence of the bilinear terms $u_iu_j$. 
We therefore linearize it through the McCormick technique, i.e., by introducing additional binary variables $x_{ij}=u_iu_j$ associated with the edges of $\mathcal{G}$.
Problem~\eqref{eq:kcore2} can be thus reformulated as:

\vspace*{-1mm}
{\small{\begin{subequations}\label{eq:kcore2-mccormick}
\begin{align}
  \max_{x,u} &\; n - \sum_{i \in V} u_i & \\
  \text{s.t.} & \sum_{j \in N(i)} u_j + k - |N(i)| \leq \sum\limits_{j \in N(i)}x_{ij} + (k - |N(i)|)u_i &\forall\,i \in V \label{eq:kcore2-mccormick0}\\ 
  &\; x_{ij} \leq u_i & \forall\,i \in V, j \in N(i) \label{eq:kcore2-mccormick1}\\
  &\; x_{ij} \leq u_j & \forall\,i \in V, j \in N(i)\label{eq:kcore2-mccormick2}\\
  &\; x_{ij} - u_i - u_j \geq -1 & \forall\,i \in V, j \in N(i)\label{eq:kcore2-mccormick3}\\
  &\; u \in \{0,1\}^n, \; x\in \{0,1\}^{|E|} \label{eq:kcore2-sets}
\end{align}
\end{subequations}}}
The $x$ variables represent the edges of the subgraph induced by the nodes outside of the $k$-core.
Indeed, due to constraints~\eqref{eq:kcore2-mccormick1}--\eqref{eq:kcore2-mccormick3}, in any feasible solution of the model, two nodes $i$ and $j$ are connected by an edge (i.e., $x_{ij}=1$) if and only if $u_i=u_j=1$. 
Moreover, if $u_i=0$ (i.e., the node $i$ is in the $k$-core) the right-hand-side of constraints \eqref{eq:kcore2-mccormick0} becomes zero because of constraints~\eqref{eq:kcore2-mccormick1}, implying that $|N(i)| - \sum_{j \in N(i)} u_j \ge k$, which is equivalent to $ \sum_{j \in N(i)} (1-u_j) \ge k$, guaranteeing that the number of neighbors of any node $i$ in the $k$-core is at least $k$.
\begin{figure}[h!]
  \centering
  \includegraphics[width=.4\textwidth]{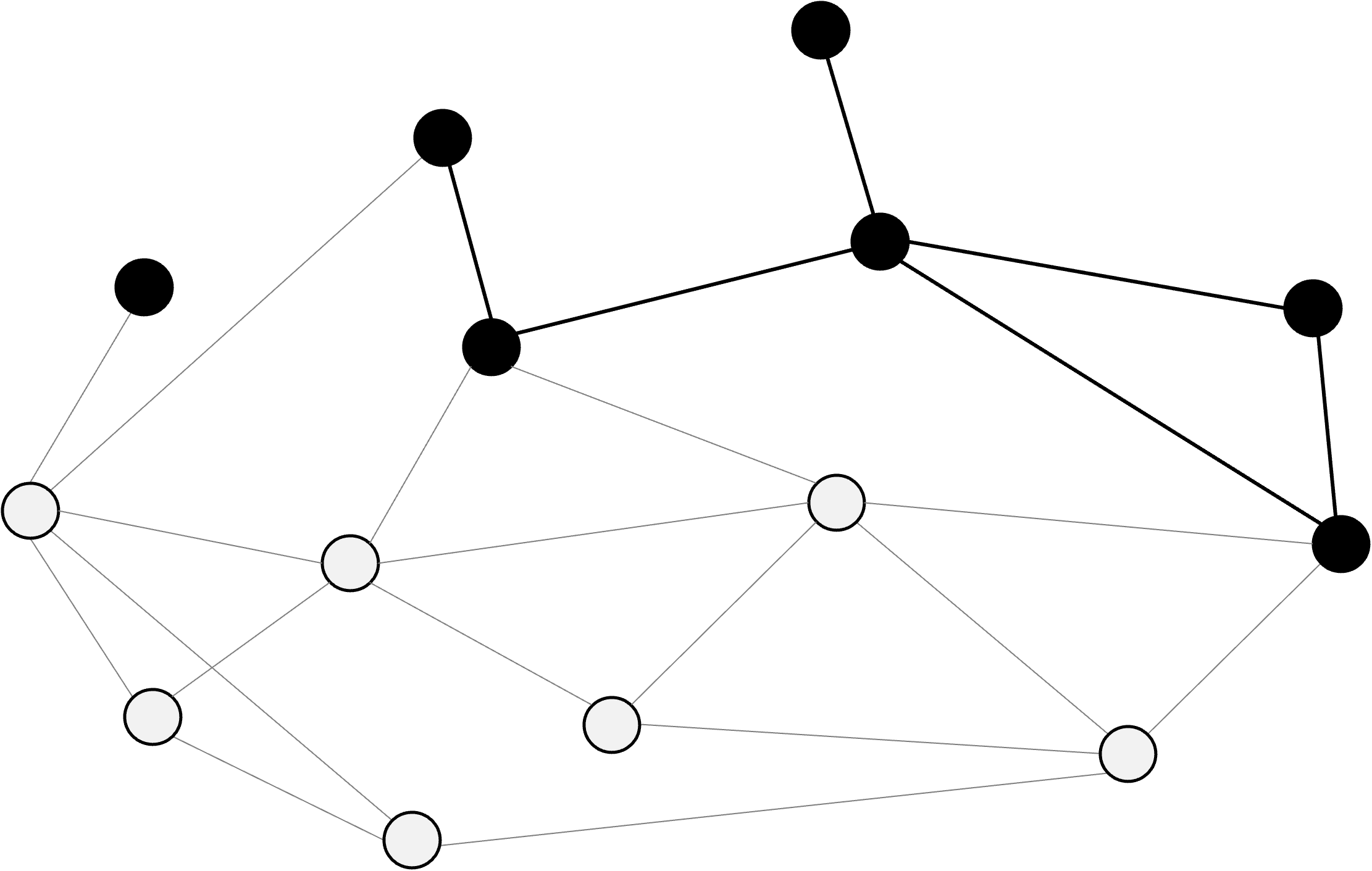}
  \caption{Nodes and edges in the subgraph induced by the nodes outside of the $3$-core of graph in Figure~\ref{fig:example_coreness}.}
  \label{fig:kcore2}
\end{figure}

In Figure~\ref{fig:kcore2}, using the graph in Figure~\ref{fig:example_coreness} with $k=3$, we report in black the nodes $i$ for which $u_i=1$ and the edges $\{i,j\}$ for which $x_{ij}=1$ in the optimal solution of formulation~\eqref{eq:kcore2}, i.e., the nodes and the edges of the subgraph induced by the nodes outside of the $3$-core.

We now claim that the optimal solution of the continuous relaxation of the model~\eqref{eq:kcore2-mccormick} (i.e., the formulation obtained by replacing constraints~\eqref{eq:kcore2-sets} with $u\in[0,1]^n, \; x\in[0,1]^{|E|}$) is integer.

\begin{theorem}\label{th:integr}
Any optimal solution $(x^*,u^*)$ of the continuous relaxation of formulation~\eqref{eq:kcore2-mccormick} is integer.
\end{theorem}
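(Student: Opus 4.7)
The plan is to proceed by strong induction on $|V|$, mirroring the classical peeling procedure for computing the $k$-core.

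First I would establish a key preliminary: any feasible $(x^*, u^*)$ of the continuous relaxation of~\eqref{eq:kcore2-mccormick} must satisfy $u_v^* = 1$ and $x_{vj}^* = u_j^*$ for every node $v$ with $|N(v)| < k$ and every $j \in N(v)$. Using $x_{vj}^* \le u_j^*$ from~\eqref{eq:kcore2-mccormick2} to upper-bound the right-hand side of~\eqref{eq:kcore2-mccormick0} at $v$ and cancelling $\sum_{j \in N(v)} u_j^*$ from both sides yields
\[
k - |N(v)| \;\leq\; (k - |N(v)|)\, u_v^*,
\]
and since $|N(v)| < k$ gives $k - |N(v)| > 0$, this forces $u_v^* \ge 1$, i.e.\ $u_v^* = 1$. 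Combining $x_{vj}^* \le u_j^*$ with $x_{vj}^* \ge u_v^* + u_j^* - 1 = u_j^*$ from~\eqref{eq:kcore2-mccormick3} then pins $x_{vj}^* = u_j^*$.

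For the base case $\delta(\mathcal{G}) \ge k$, the vector $(u,x) = (0,0)$ is feasible and attains the objective $n$, which is the trivial upper bound since $u \ge 0$; moreover $\sum_i u_i = 0$ forces $u = 0$ and then~\eqref{eq:kcore2-mccormick1} forces $x = 0$, so $(0,0)$ is the unique and integer LP optimum. For the inductive step, fix a node $v$ with $|N(v)| < k$ and set $\mathcal{G}' = \mathcal{G} \setminus \{v\}$. I would argue that restriction to $V \setminus \{v\}$ and $E(\mathcal{G}')$ is a value-preserving bijection between feasible LP solutions on $\mathcal{G}$ (all of which satisfy the forced values $u_v = 1$, $x_{vj} = u_j$ by the preliminary) and feasible LP solutions on $\mathcal{G}'$. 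The crucial bookkeeping step is that, for each neighbor $j \in N(v)$, constraint~\eqref{eq:kcore2-mccormick0} for $j$ in $\mathcal{G}$ becomes, after substituting $u_v = 1$ and $x_{jv} = u_j$, exactly constraint~\eqref{eq:kcore2-mccormick0} for $j$ in $\mathcal{G}'$: the extra $u_v$ summand on the LHS and the change from $|N_{\mathcal{G}}(j)|$ to $|N_{\mathcal{G}}(j)| - 1$ cancel against the extra $x_{jv}$ and the matching shift in the $(k - |N(j)|)\, u_j$ term on the RHS. Consequently, any LP-optimal $(x^*, u^*)$ for $\mathcal{G}$ restricts to an LP-optimum on $\mathcal{G}'$, which the inductive hypothesis certifies integer; together with the forced integer values on $v$'s incidences, $(x^*, u^*)$ itself is integer.

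The main technical obstacle is the cancellation verification in the inductive step: carefully tracking the $|N(j)|$-dependent constants in~\eqref{eq:kcore2-mccormick0} across the two graphs is the one place where a counting error could creep in, whereas every other feasibility check reduces to routine substitution.
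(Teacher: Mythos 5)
Your proof is correct, but it takes a genuinely different route from the paper's. The paper argues directly and locally on an arbitrary optimal solution of the relaxation: for each node $i$ it splits $N(i)$ into the neighbors with $u_j^*=1$ and the rest, combines \eqref{eq:kcore2-mccormick0} with the McCormick constraints to obtain $\bigl[|N_1(i)|+k-|N(i)|\bigr](1-u_i^*)\le 0$, and then concludes $u_i^*=1$ when the bracket is positive and $u_i^*=0$ (by optimality, via an informal improvement argument) otherwise. You instead run a strong induction on $|V|$ that mirrors the peeling algorithm: you show that \emph{every feasible} LP point has $u_v^*=1$ and $x_{vj}^*=u_j^*$ forced at any node $v$ of degree below $k$ (using \eqref{eq:kcore2-mccormick2} together with \eqref{eq:kcore2-mccormick3}, which is legitimate since Theorem~\ref{th:integr} concerns formulation~\eqref{eq:kcore2-mccormick}, where that constraint is present), verify the exact constraint cancellation so that the LP on $\mathcal{G}$ is value-equivalent to the LP on $\mathcal{G}\setminus\{v\}$, and bottom out at the case $\delta(\mathcal{G})\ge k$ where $(0,0)$ is the unique optimum. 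The cancellation you flag as the delicate point does check out: substituting $u_v=1$ and $x_{jv}=u_j$ into \eqref{eq:kcore2-mccormick0} at a neighbor $j$ reproduces exactly the same constraint with $|N(j)|$ replaced by $|N(j)|-1$. What your approach buys: it treats the $x$ variables explicitly (the paper only argues integrality of $u^*$ and leaves the integrality of $x^*$ implicit), it avoids the somewhat hand-waved ``since we are minimizing it will be set to 0'' step by pushing all optimality reasoning into the trivial base case, it actually yields uniqueness of the LP optimum, and it makes the connection to the $k$-core peeling procedure transparent; the price is more bookkeeping (the restriction/extension bijection and the neighborhood-size accounting), whereas the paper's argument is shorter and purely local.
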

\begin{proof}
Any optimal solution of the continuous relaxation of formulation~\eqref{eq:kcore2-mccormick} is feasible, i.e., it satisfies the following constraints 
\begin{equation}\label{constr_opt}
  \sum\limits_{j \in N(i)} u^*_j + (k - |N(i)|) \leq \sum\limits_{j \in N(i)}x^*_{ij} + (k - |N(i)|)u^*_i, \; \forall\,i \in V.
\end{equation}
For a certain node $i$, let $N_1(i)$ be the set of nodes $j \in N(i)$ such that $u_j^*=1$. For the nodes in the set $N_1(i)$, $x_{ij}^*=u_i^*$ because of constraints~\eqref{eq:kcore2-mccormick1} and \eqref{eq:kcore2-mccormick3}. We can thus write 
$$\sum\limits_{j \in N(i)} u^*_j = \sum\limits_{j \in N_1(i)} u^*_j + \sum\limits_{j \in N(i)\backslash N_1(i)} u^*_j = |N_1(i)| + \sum\limits_{j \in N(i) \backslash N_1(i)} u^*_j,$$
and 
$$\sum\limits_{j \in N(i)}x^*_{ij} = \sum\limits_{j \in N_1(i)}u^*_{i}+ \sum\limits_{j \in N(i)\backslash N_1(i)}x^*_{ij} = |N_1(i)|u^*_{i}+ \sum\limits_{j \in N(i)\backslash N_1(i)}x^*_{ij} \leq |N_1(i)|u^*_{i}+ \sum\limits_{j \in N(i)\backslash N_1(i)}u^*_{j},$$
the last inequality coming from the McCormick constraint~\eqref{eq:kcore2-mccormick2}.
We can thus reformulate~\eqref{constr_opt} as:
\begin{equation}
  |N_1(i)| + \sum\limits_{j \in N(i) \backslash N_1(i)} u^*_j + (k - |N(i)|) \leq |N_1(i)|u^*_{i}+ \sum\limits_{j \in N(i)\backslash N_1(i)}u^*_{j} + (k - |N(i)|)u_i^*,
\end{equation}
which can be reduced to 
\begin{equation}
  \Big[|N_1(i)| + (k - |N(i)|)\Big](1-u_i^*) \leq 0.
\end{equation}
If $|N_1(i)| + (k - |N(i)|) > 0$, then $u_i^*=1$.
Otherwise, if $|N_1(i)| + (k - |N(i)|) \leq 0$ $u_i^*$ can have any value in $[0,1],$ but since we are minimizing, and reducing the value of $u_i$ does not impact on the feasibility of McCormick constraints, it will be set to 0.
Thus, in the optimal solution of the continuous relaxation of \eqref{eq:kcore2-mccormick}, each $u_i$ has value equal to either 0 or 1, thus the solution is integer.
\end{proof}

Thus, the linear relaxation of problem~\eqref{eq:kcore2-mccormick} provides the optimal solution of formulation~\eqref{eq:kcore2}. 
We can further notice that constraints~\eqref{eq:kcore2-mccormick3}, and $x \leq 1$, can be dropped. Indeed, they are redundant and guaranteed by the remaining constraints.
The resulting relaxed problem is
\begin{subequations}\label{eq:kcore2-lin}
\begin{align}
  \max_{x,u} &\; n - \sum_{i \in V} u_i & \label{eq:kcore2-lin-obj}\\
  \text{s.t.} & \sum_{j \in N(i)} u_j + k - |N(i)| \leq \sum\limits_{j \in N(i)}x_{ij} + (k - |N(i)|)u_i &\forall\,i \in V \label{eq:kcore2-lin0}\\ 
  &\; x_{ij} \leq u_i & \forall\,i \in V, j \in N(i) \label{eq:kcore2-lin1}\\
  &\; x_{ij} \leq u_j & \forall\,i \in V, j \in N(i)\label{eq:kcore2-lin3}\\
  &\; u \in [0,1]^n, \; x\in\mathbb{R_+}^{|E|}
\end{align}
\end{subequations}
which is linear in the variables $x$ and $u$. 

\section{Mathematical formulations for the Collapsed $k$-Core Problem}\label{sec:math-formulations}
Leveraging on the formulations presented above for the $k$-Core Detection Problem, in this section, we propose three models for the Collapsed $k$-Core Problem, which is defined as follows.
\begin{definition}
Given an undirected graph $\mathcal{G}=(V,E)$ and two positive integers $k$ and $b$,
the Collapsed $k$-Core Problem consists of finding a subset $W^*\subset V$ of $b$ nodes, 
the removal of which minimizes the size of the resulting $k$-core (i.e., for which $|C_k(\mathcal{G}\backslash W^*)|$ is minimum).
\end{definition} 

In the following, we will refer to these $b$ nodes as \textit{collapsers}.

\begin{figure}[h!]
  \centering
  \includegraphics[width=0.5\textwidth]{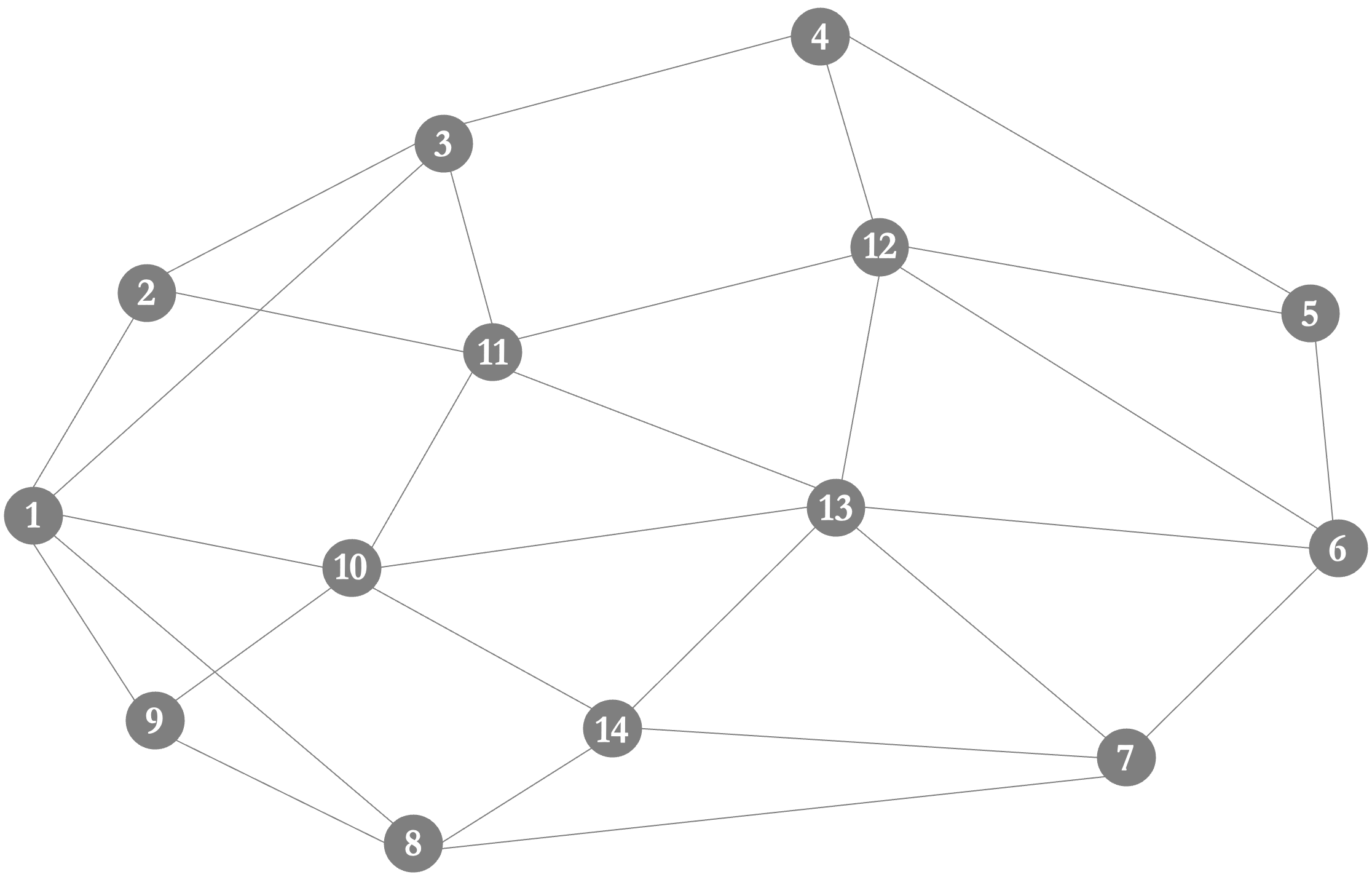}
  \caption{\textbf{Example graph.}}
  \label{fig:starting_graph}
\end{figure}
In Figures~\ref{fig:starting_graph}, \ref{fig:example_1} and \ref{fig:example_2} some of the introduced concepts are visualized through examples. The graph $\mathcal{G}$ with $14$ nodes in Figure \ref{fig:starting_graph} is a $3$-core. Indeed, every node has degree at least $3$. The subgraph induced by the set $S=\{6,7,10,11,12,13,14\}$ is a $3$-subcore, i.e., $S$ is a set of nodes such that the degree of $\mathcal{G}[S]$ is three, but its cardinality is not maximum. 
Assume that we want to determine the Collapsed 3-core Problem on this graph with budget $b=1$, i.e., we can remove one node only. Two possible feasible solutions of this Collapsed $3$-core Problem are represented in Figure~\ref{fig:example_1} and Figure~\ref{fig:example_2}. The first one in Figure~\ref{fig:example_1}, which consists in removing node $13$ from the graph, leads to a $3$-core of cardinality $13$ (no node follows node $13$, because no node has less than $3$ neighbors in the remaining graph). The second one in Figure~\ref{fig:example_2}, which consists in removing node $1$ from the graph, leads to a better solution, since the obtained $3$-core after the cascade effect following node $1$ removal consists in $7$ nodes. In fact, this is the optimal solution of the Collapsed $3$-core Problem for the graph in Figure \ref{fig:starting_graph}. 
\begin{figure}[H]
  \vspace*{-0.8cm}
   \centering
   \hspace*{\fill}%
   \hfill
   \subfloat[Selection of node 13 to remove.]{\includegraphics[scale=0.36]{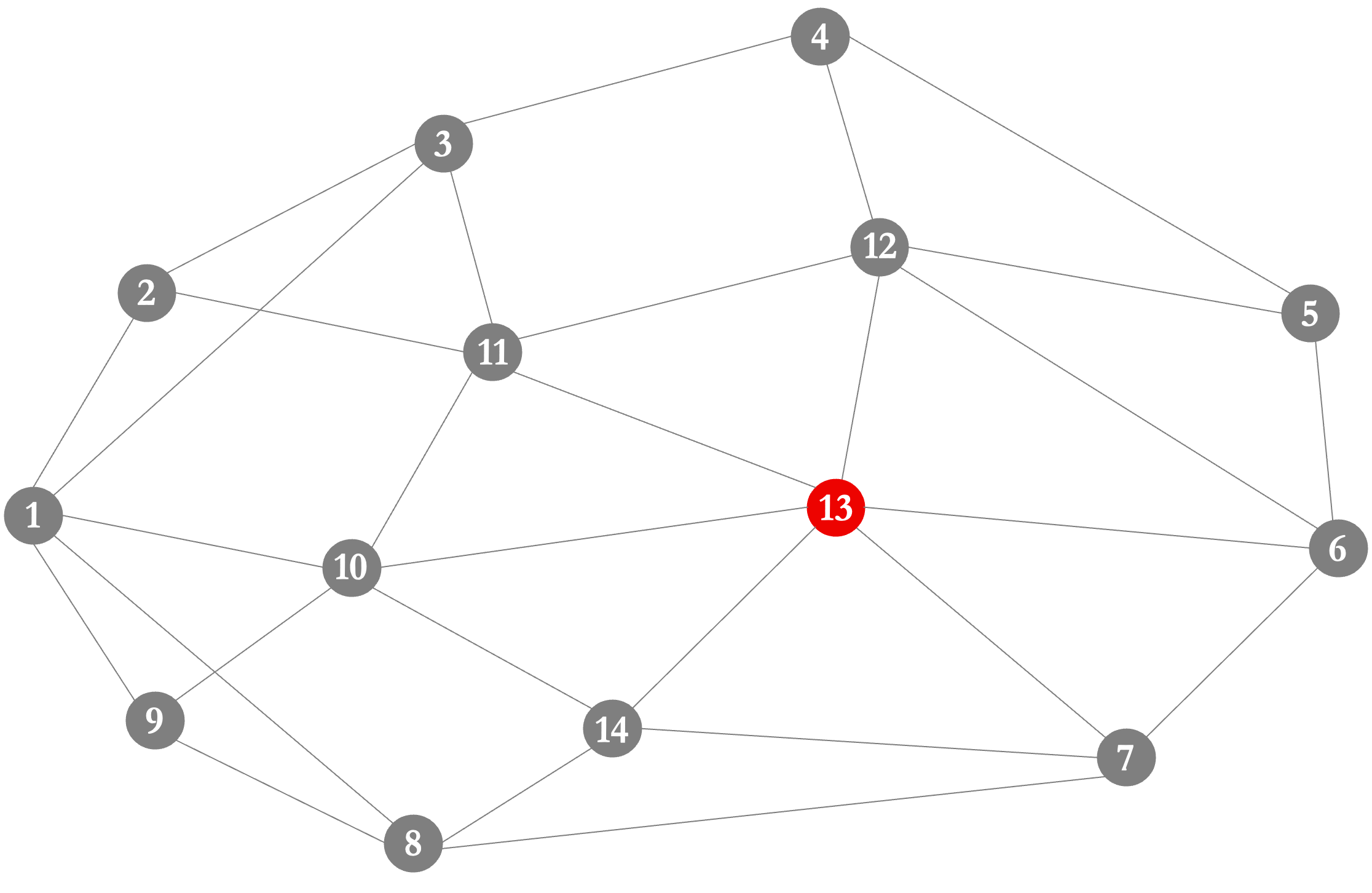}\label{ex1:0}}\hfill
  \subfloat[Node 13 leaving.]{\includegraphics[scale=0.36]{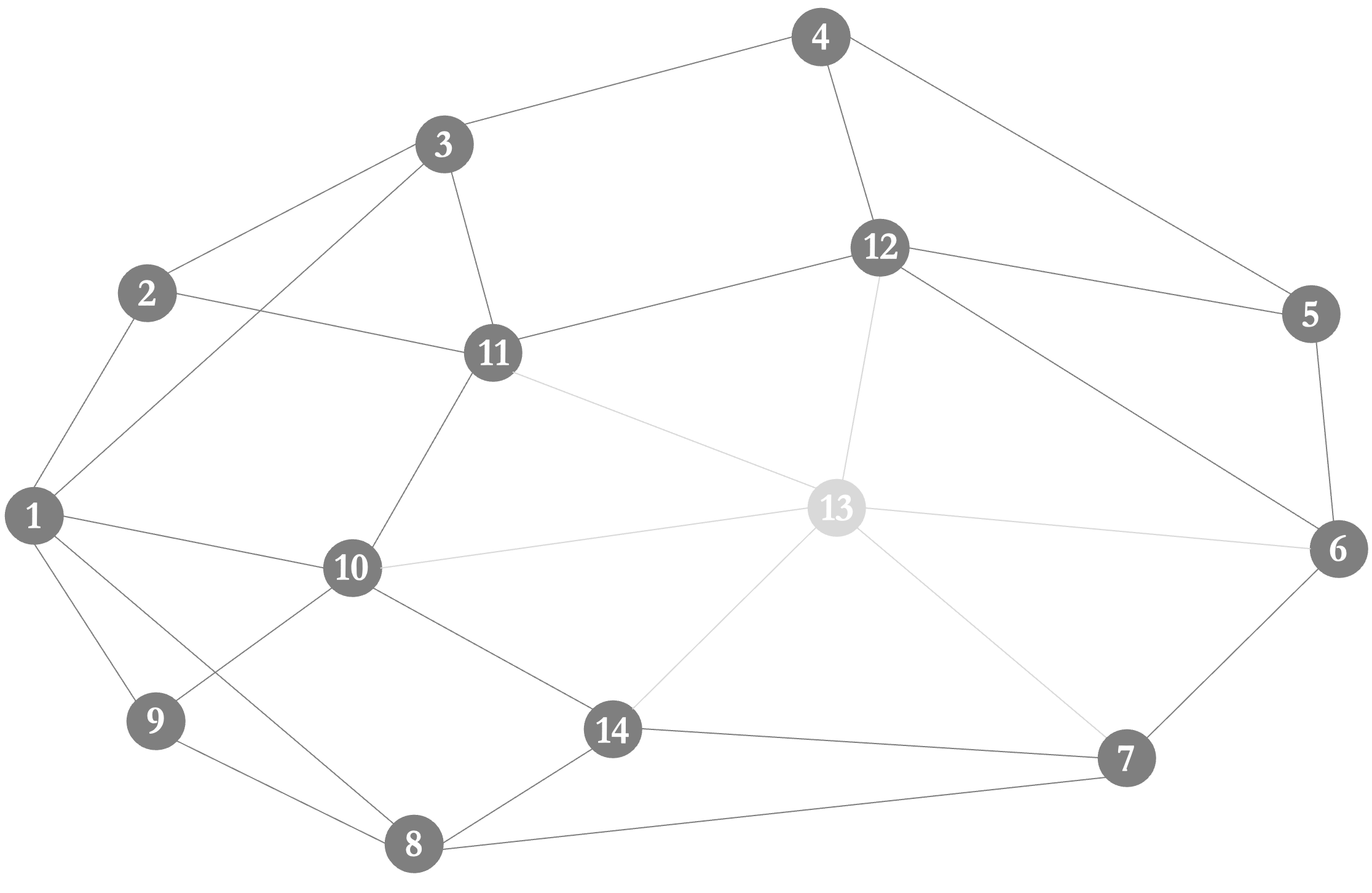}\label{ex1:1}}\hfill
  \hspace*{\fill}%
  \caption{\textbf{Feasible and suboptimal solution.}}
  \label{fig:example_1}

\end{figure}

\begin{figure}[h!]
  \centering
  \hspace*{\fill}%
  \hfill
  \subfloat[Selection of node 1 to remove.]{\includegraphics[width=.38\textwidth]{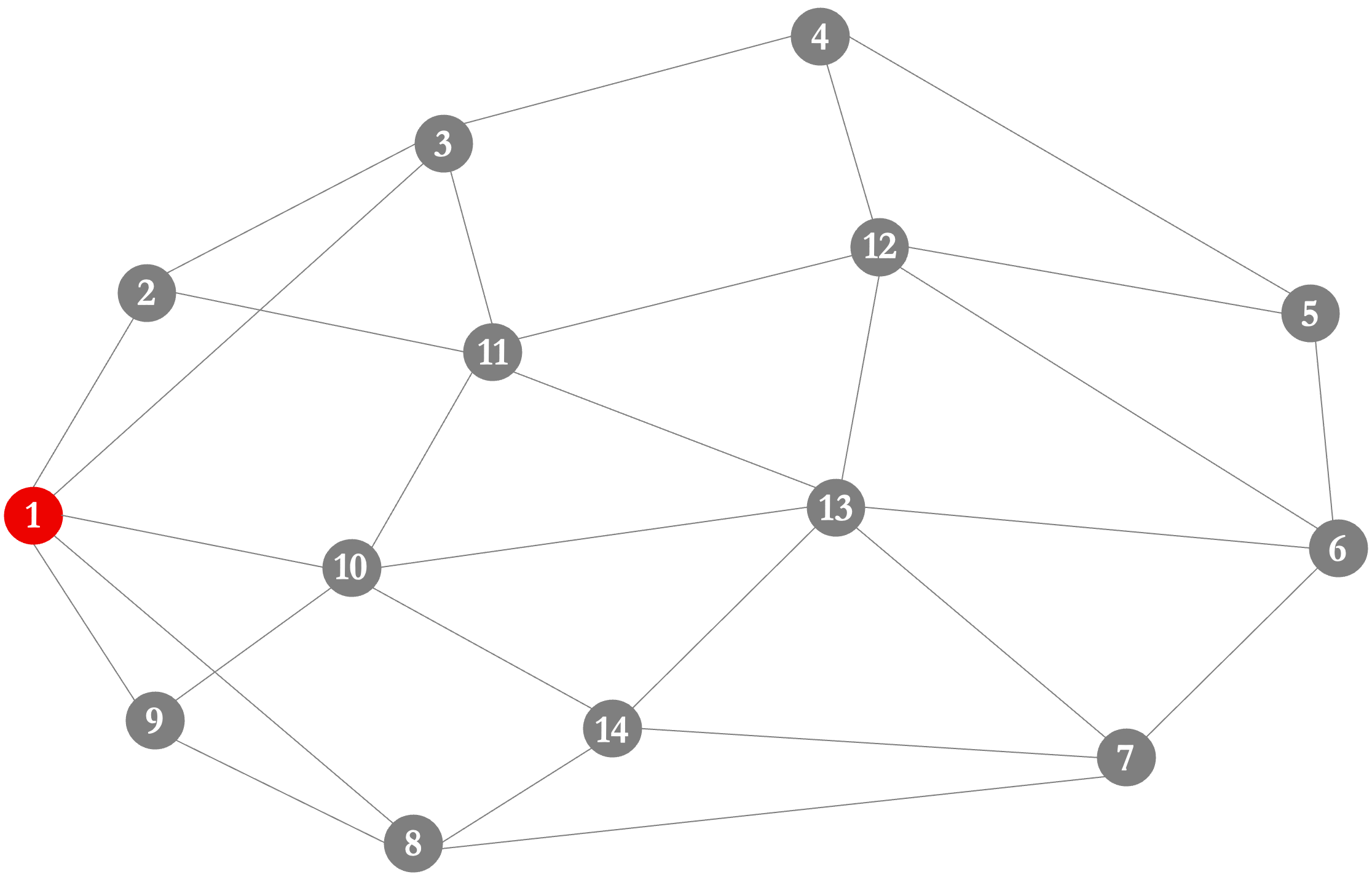}\label{ex2:0}}\hfill
  \subfloat[Node 1 leaving.]{\includegraphics[width=.38\textwidth]{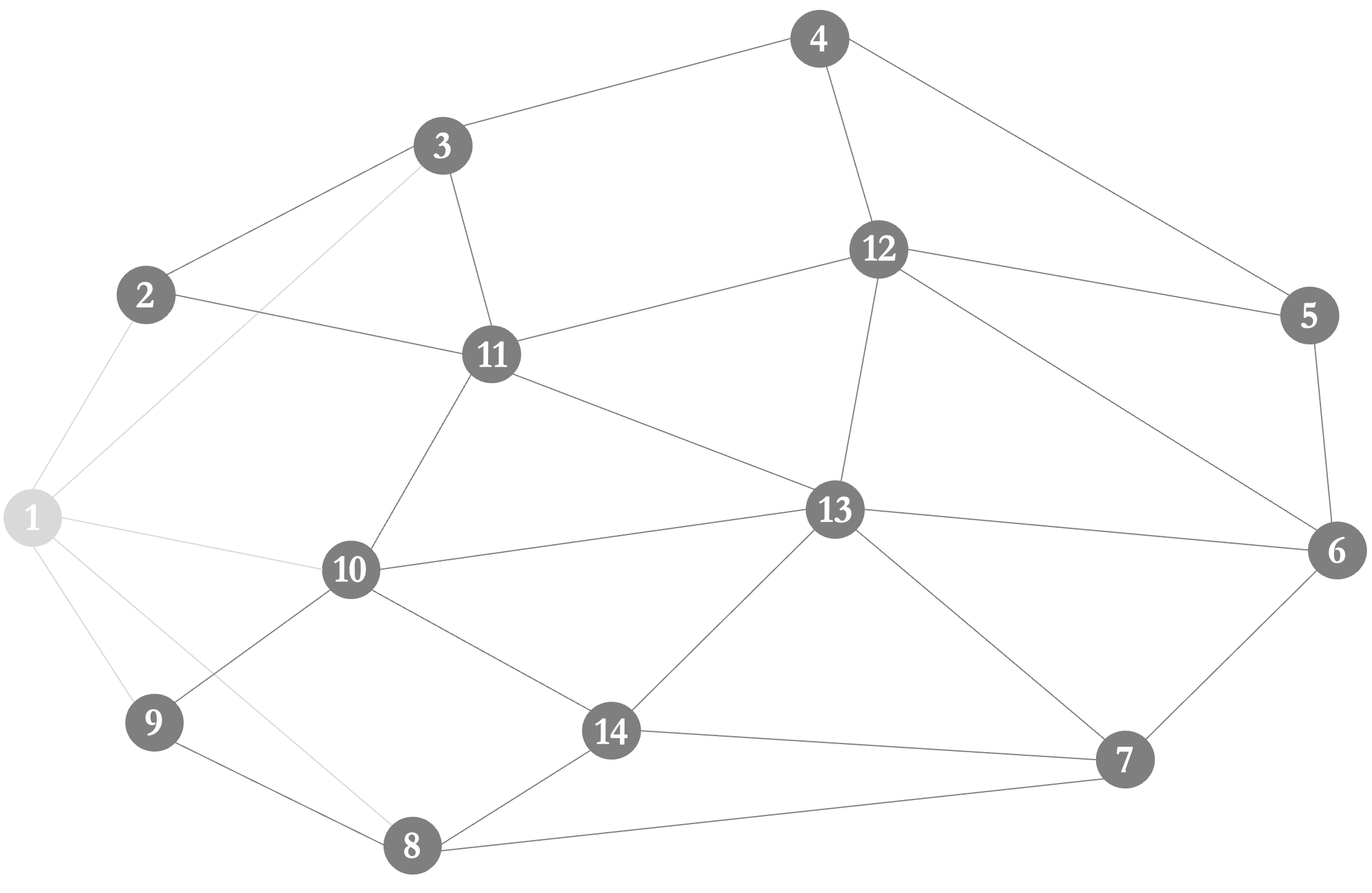}\label{ex2:1}}\hfill
  \hspace*{\fill}\\
  \hspace*{\fill}%
  \hfill
  \subfloat[Nodes 2 and 9 leaving.]{\includegraphics[width=.38\textwidth]{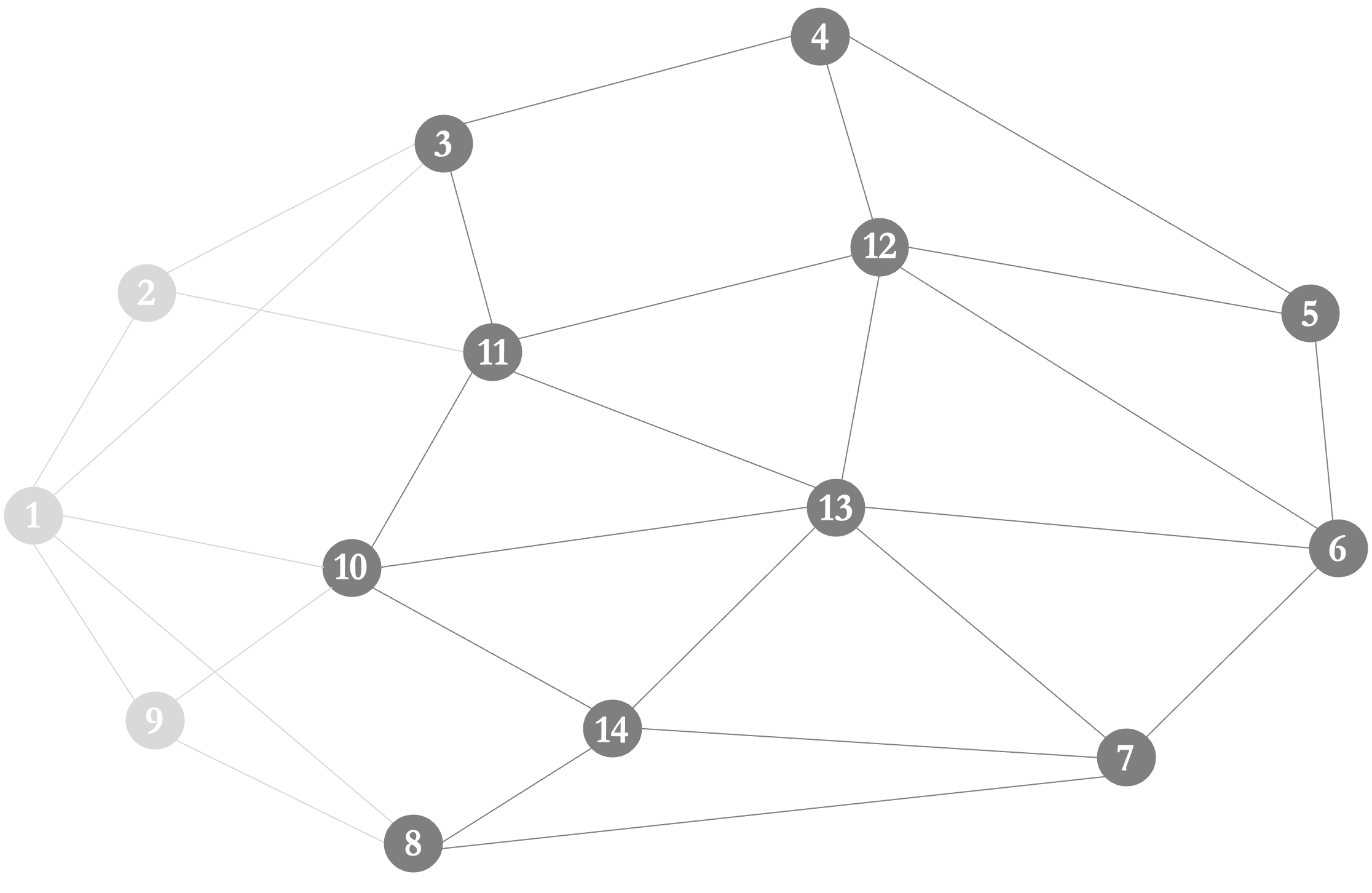}\label{ex2:2}}\hfill
  \subfloat[Nodes 3 and 8 leaving.]{\includegraphics[width=.38\textwidth]{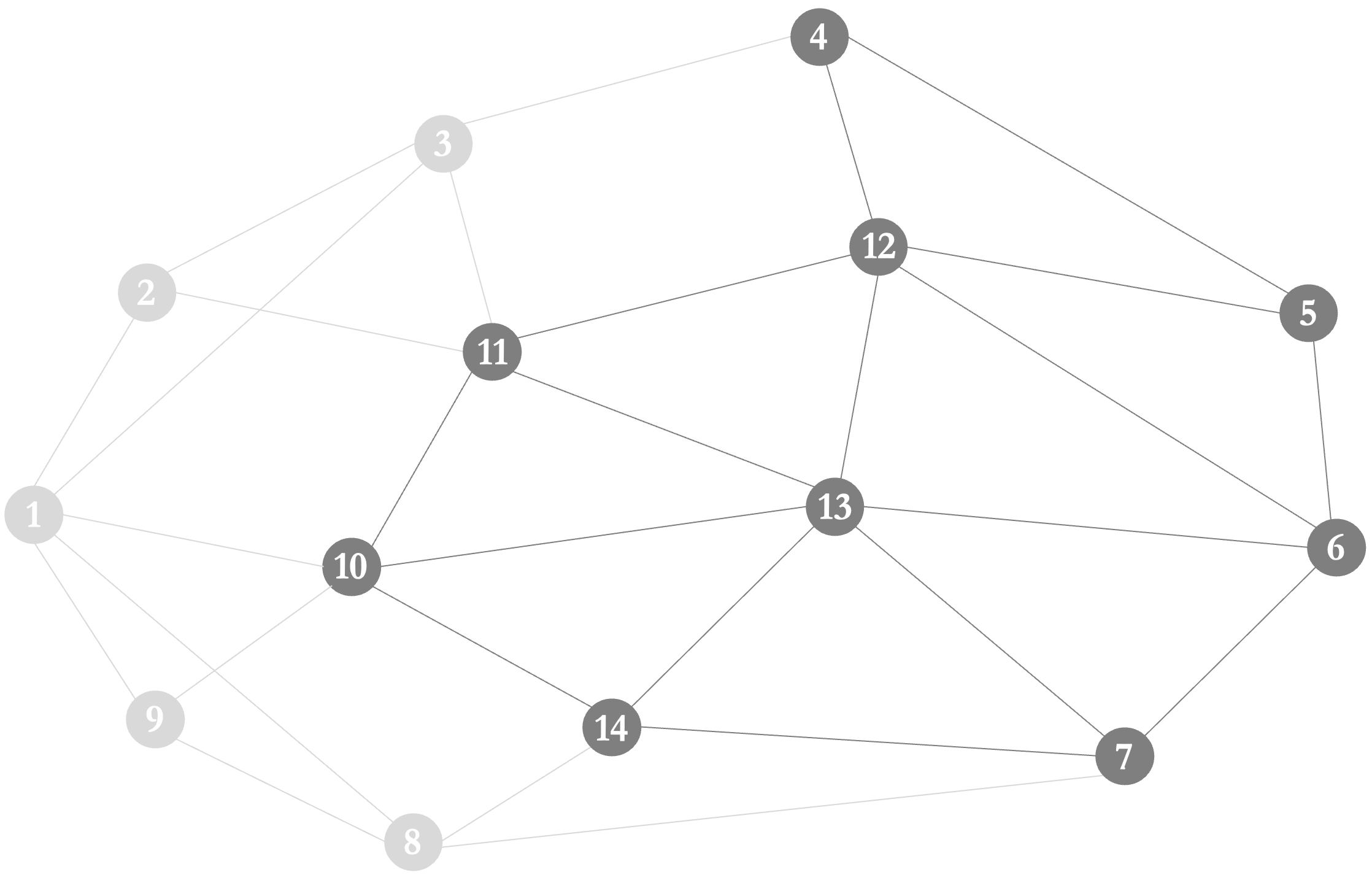}\label{ex2:3}}\hfill
  \hspace*{\fill}\\
  \hspace*{\fill}%
  \hfill
  \subfloat[Node 4 leaving.]{\includegraphics[width=.38\textwidth]{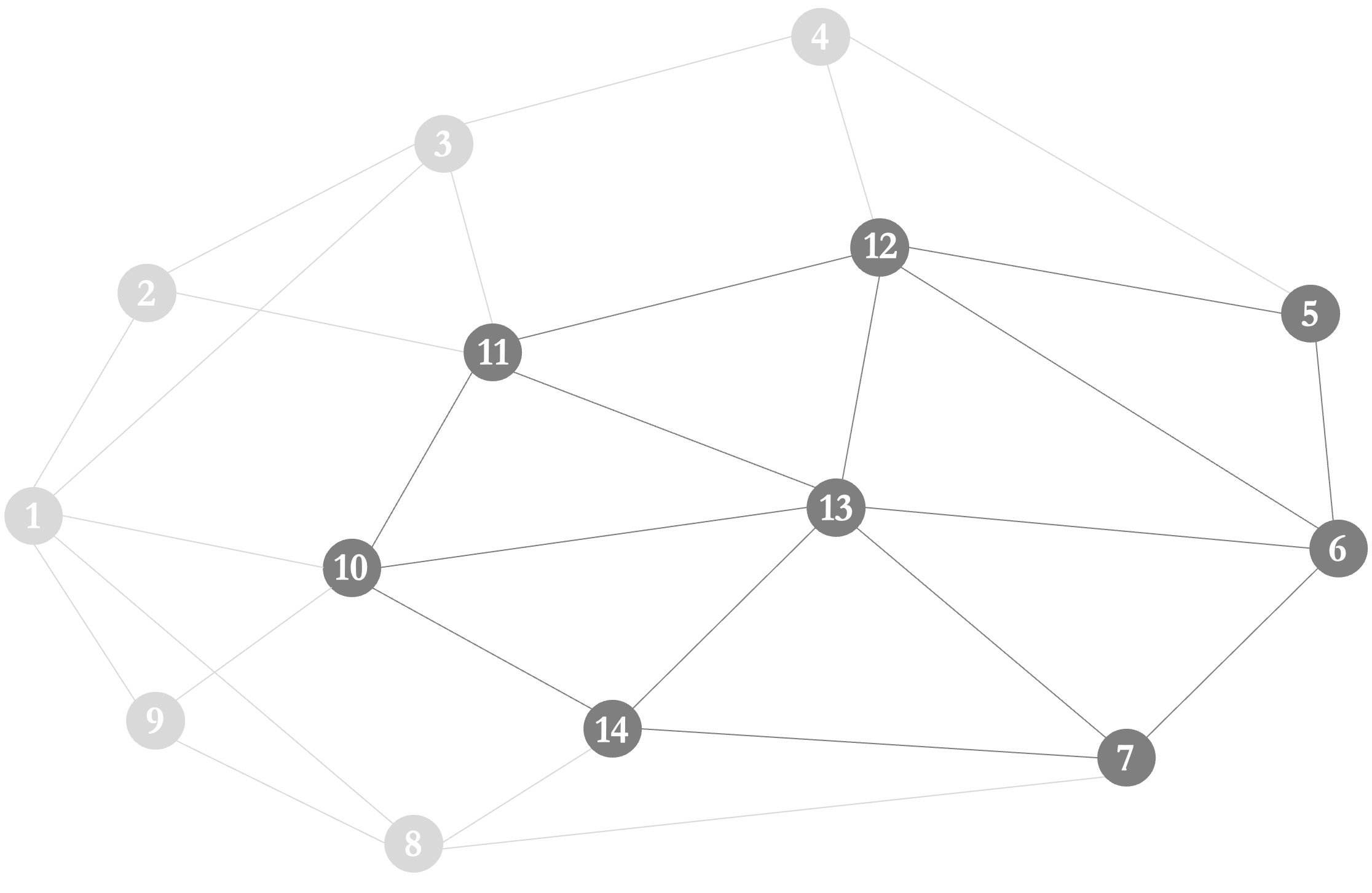}\label{ex2:4}}\hfill
  \subfloat[Node 5 leaving.]{\includegraphics[width=.38\textwidth]{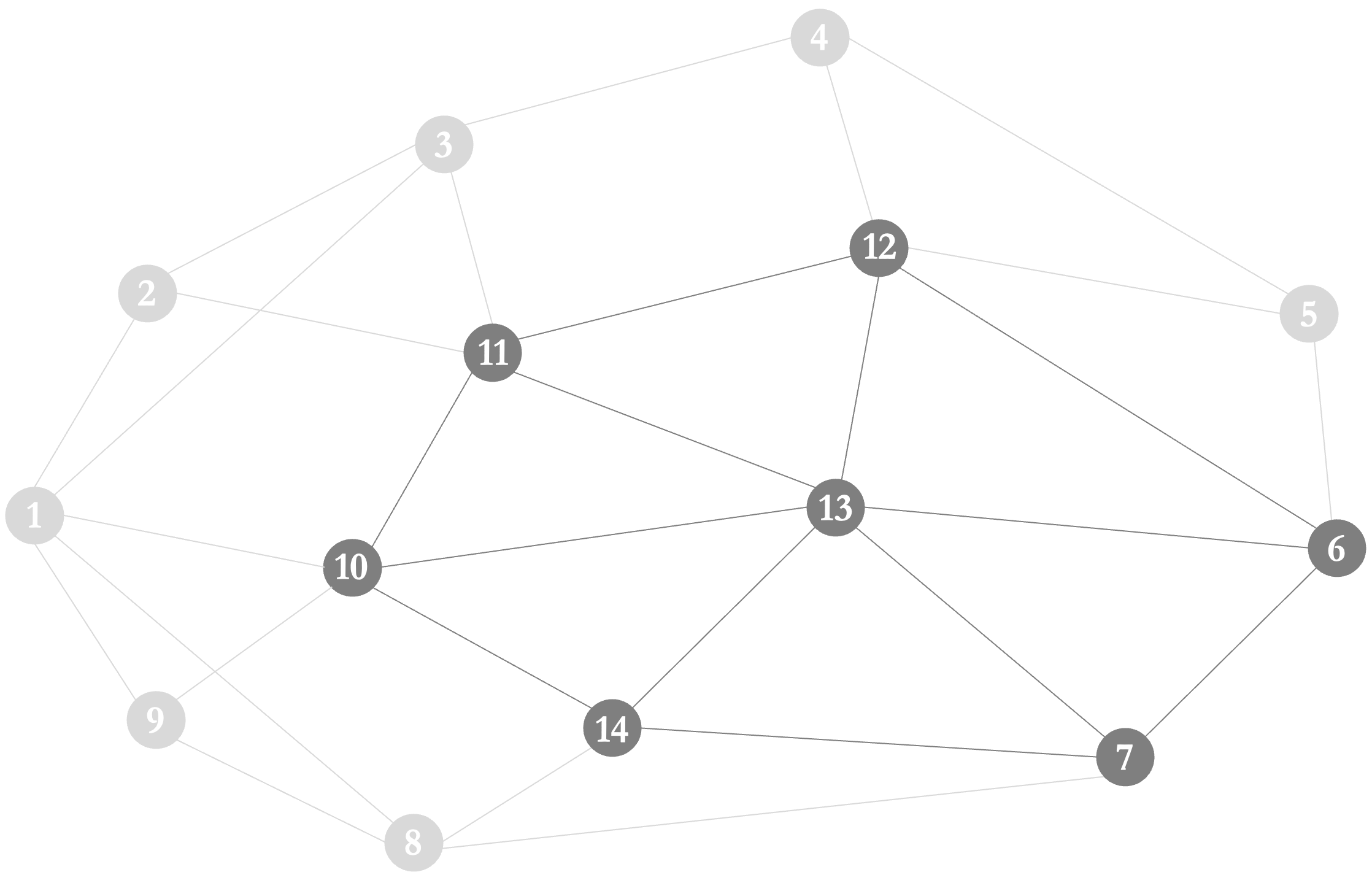}\label{ex2:5}}\hfill
  \hspace*{\fill}\\%
  \caption{\textbf{Optimal solution.}}\vspace{-0.5cm}
  \label{fig:example_2}
\end{figure}

In the rest of this section, we propose three formulations for the problem: the first one relies on the iterative process used to determine the collapsed $k$-core, after the removal of the $b$ nodes; the other two formulations are bilevel formulations, considering two agents, a leader who selects the nodes to remove and a follower who computes the resulting $k$-core, solving either formulation \eqref{eq:kcore1} or \eqref{eq:kcore2-lin}.
The bilevel structure of the problem comes from the fact that the $k$-core is defined as the induced subgraph with all nodes having degree at least $k$ of maximum size. Given that, the aim of the problem is to find the set of $b$ nodes to remove, such that the maximal induced $k$-subcore is of minimum cardinality.

\subsection{Time-Dependent Formulation}\label{subsec:timedep}
In this section, we describe a ``natural'' ILP formulation for the Collapsed $k$-Core Problem. This formulation models the so-called cascade effect deriving from the removal of $b$ nodes, which are removed at time $0$. At each deletion round (corresponding to a time instant $t$), all the nodes the degree of which becomes less than $k$ are removed from the graph.
Assuming that the problem instance is feasible, i.e., that the considered graph has at least $b$ nodes, the deletion rounds are at most $T = n-b$. 

We introduce the binary variables $a_i^t$, defined for each node $i \in V$ and time $t=0,\dots,T$, such that:
\begin{displaymath}
	a_{i}^t = \begin{cases}
		1 &\text{if node $i$ belongs to the induced subgraph at time $t$}\\
		0 &\text{if node $i$ does not belong to the induced subgraph at time $t$}
	\end{cases}
\end{displaymath}
In the example shown in Figure~\ref{fig:example_2}, e.g., $a_{1}^0 = 0$ and $a_{i}^0 = 1$ for $i \in V\backslash\{1\},$ while, as shown in Figure~\ref{ex2:5}, $a_{i}^T = 1$ for $i=\{6,7,10,11,12,13,14\}$ and $a_{i}^T = 0$ for $i=\{1,2,3,4,5,8,9\}$, where $T=4$.

The Collapsed $k$-Core Problem can be formulated as:
 \begin{subequations}\label{eq:formulation_td}  
 \begin{align}
    \min\limits_{a} &\, \sum_{i \in V} a_i^T & \label{eq:objfunction_td}\\
    \text{s.t.} & \, \sum_{i \in V} a_{i}^0 = n - b & \label{eq:budget_td}\\
    & \;\; a_{i}^t \le a_{i}^{t-1} & \forall\,i \in V,\; t = 1, \dots, T \label{eq:onlyremoval_td}\\
    &\, \sum_{j \in N(i)} a^{t-1}_j - k +1 \leq d(i)\revised{\cdot}a^t_i + d(i)\revised{\cdot}(1 - a^0_i) & \forall\,i \in V,\; t = 1, \dots, T \label{eq:const_one_td} \\
    &\; a_i^t \in \{0,1\} & \forall\,i \in V,\; t = 0, \dots, T
  \end{align}
\end{subequations}
where $d(i)=|N(i)| - k + 1$.
The objective function minimizes the number of nodes remaining in the last time instant $T$. The first constraint~\eqref{eq:budget_td} imposes that at the beginning (first time instant) exactly $b$ nodes are removed. Constraints~\eqref{eq:onlyremoval_td} state that, if node $i$ is not in the graph at time~$t-1$, it cannot be in the graph at time~$t$.
Finally, constraints~\eqref{eq:const_one_td} ensure that, if the node is not removed at time 0, it must stay in the graph at time $t$ iff more than $k-1$ of its neighbors ``survived'' at time $t-1$. Constraint~\eqref{eq:const_one_td}, for a given $i\in V$, and a given $t \in \{1,\dots,T\}$, imposes that
\begin{itemize}
  \item if the node is not a collapser, i.e., it is not removed at time 0, and thus $a_i^0=1$
  \item and $ {\sum\limits_{j \in N(i)} a^{t-1}_j \geq k}$, i.e., $\sum\limits_{j \in N(i)} a^{t-1}_j - k + 1 \geq 1 $ (with $\sum\limits_{j \in N(i)} a^{t-1}_j \leq |N(i)|$)
\end{itemize}
then $a_i^t$ is set to 1. The term $(|N(i)| - k + 1)(1 - a^0_i)$ is thus needed to guarantee that, for the collapsers, the constraints~\eqref{eq:const_one_td} are still satisfied. For these nodes indeed $a_i^0$ and $a_i^t$ will be 0.

The obtained ILP formulation is polynomial in size and can be solved using existing state-of-the-art solvers. It is indeed a compact formulation, which is the main advantage of this first natural approach. However, because of time index $t$, the number of variables is large, thus requiring a long computational time to be solved. For this reason, we present in the following sections two alternative formulations for the Collapsed $k$-Core Problem.

\subsection{A first bilevel formulation}
In this section, we formulate the Collapsed $k$-Core Problem using bilevel programming where the leader aims at \emph{minimizing} the cardinality of the $k$-core obtained by removing exactly $b$ nodes. The follower instead aims at detecting the $k$-core obtained after the removal of the $b$ nodes chosen by the leader, which corresponds to finding the \emph{maximal} induced subgraph where all the nodes have degree at least $k$. The follower's problem is modeled as in \eqref{eq:kcore1}, with additional linking constraints imposing that the $k$-core is computed in the graph resulting after the removal of $b$ nodes by the leader.

We consider the upper-level binary variables $w_i$ and the lower-level binary variables $y_i$ (already defined in Eq.~\eqref{eq:kcore1-var}), both defined for each node $i \in V$:
	
	\begin{displaymath}
		w_i = \begin{cases}
			1 &\text{if node $i$ is removed by the leader}\\
			0 &\text{otherwise}
		\end{cases}
	\end{displaymath}
	
	\begin{displaymath}
		y_i = \begin{cases}
			1 &\text{if node $i$ is in the collapsed $k$-core of the follower}\\
			0 &\text{otherwise}
		\end{cases}
	\end{displaymath}
\begin{subequations}

Variable $w_i$ is 1 iff node $i$ is a collapser.
We remark that it corresponds to $1-a_i^0$ in the time-dependent formulation~\eqref{eq:formulation_td}, presented in Subsection~\ref{subsec:timedep}. Variable $y_i$, instead, is 1 iff node $i$ is in the resulting $k$-core, thus corresponds to $a_i^T$ in the time-dependent formulation~\eqref{eq:formulation_td}.

In the example shown in Figure~\ref{fig:example_2}, e.g., $w_1 = 1$ and $w_{i} = 0$ for $i \in V\backslash\{1\},$ while, as shown in Figure~\ref{ex2:5}, $y_{i} = 1$ for $i=\{6,7,10,11,12,13,14\}$ and $y_{i} = 0$ for $i=\{1,2,3,4,5,8,9\}$.

The set of all possible leader's policies $\mathcal{W}$ (removing exactly $b$ nodes) is
	\begin{equation}\label{eq:Wset}
		\mathcal{W} = \left\{w \in \{0,1\}^n : \sum_{i \in V} w_i = b \right\}.
	\end{equation}
Let ${\Omega}$ denote the set of all subsets $W \subseteq V$ such that $|W|=b$, \revised{i.e., $\Omega = \{W\subseteq V ~:~|W| = b\}$}. There is a one-to-one correspondence between each element $W \in {\Omega}$ and its incidence vector $w \in \mathcal{W}$. The set of all possible nodes subsets inducing $k$-subcores of $\mathcal{G}$ is $\mathcal{K},$ defined in Subsection~\ref{subsec:kcore1}.
If no node is interdicted by the leader, the problem of the follower corresponds to finding the $k$-core in the original graph, defined in Subsection~\ref{subsec:kcore1}.
\end{subequations}
The resulting Collapsed $k$-Core Problem can be formulated as the following bilevel problem:
	\begin{equation}\label{eq:formulation_bil}
		\min_{w \in \mathcal{W}} \max_{y \in \mathcal{Y}} \left\{ \sum_{i \in V} y_i~:~y_i \le 1-w_i,\; \forall\,i \in V\right\}.
	\end{equation}
Constraints $y_i \le 1-w_i$ exclude from the $k$-core the collapsers. Problem~\eqref{eq:formulation_bil} does not consider the optimal solutions of the follower's problem, but only its optimal objective function value. Thus, we do not need to distinguish between optimistic and pessimistic concepts. 

\subsubsection{
Sparse Formulation}\label{subsubsec:sparse}
Formulation~\eqref{eq:formulation_bil} exhibits the structure of so-called \emph{interdiction} problems, which is a well-known class of bilevel optimization problems (see~\citet{ivana_survey,Smith-Song:2020} for recent surveys on interdiction problems).
This structure allows us to apply a Benders-like reformulation technique in which we project out the lower-level variables, and introduce an auxiliary
integer variable $z$ to represent the objective value of the lower-level problem. We refer to this formulation as \emph{sparse}, because it is given in the natural space of $w$ variables (required to describe the removed set of nodes) and a single auxiliary variable. 
We start by reformulating the problem \eqref{eq:formulation_bil} as follows:
\begin{subequations}\label{eq:formulation_bil1}
	\begin{align}
		\min_{z \in \mathbb{Z},w \in \{0,1\}^n} &\;z\\
		\text{s.t.} &\;z \geq \max_{y \in \mathcal{Y}} \left\{ \sum_{i \in V} y_i \,:\,y_i \le 1-w_i, \forall\,i \in V\right\} \label{eq:bilconstr}\\
		&\; \sum_{i \in V} w_i = b. \label{eq:budget_bil}
	\end{align}
\end{subequations}

Following the ideas from e.g., \citet{Wood:2011,FischettiLMS19,Leitner-et-al:2022}, we can then reformulate~\eqref{eq:bilconstr}, given sufficiently large $M_i$ for all $i$, as:
\begin{align}
z \geq \max_{y \in \mathcal{Y}} \left\{ \sum_{i \in V} y_i - \sum_{i \in V} M_iy_i w_i \right\} \label{eq:lowerlevel}
\end{align}
which is equivalent to the following Benders-like constraints:
\begin{align}
	z \geq \sum_{i \in V} \hat y_i - \sum_{i \in V} M_i w_i \hat y_i \quad \forall\, \hat y \in \mathcal{Y}. \label{eq:theta0}
\end{align}

\noindent In terms of sets $K \in \mathcal{K}$ inducing the $k$-subcores of $\mathcal{G}$, we can rewrite Ineqs.~\eqref{eq:theta0} as
\begin{align}
	z \geq | K| - \sum_{i \in K} M_i w_i \quad \forall\, K \in \mathcal{K}. \label{eq:theta1}
\end{align}
The value of $M_i$ needs to be set in such a way that, for a given $K$, in case node $i$ (possibly together with some other nodes from $K$) is interdicted, the value $|K| - M_i$ gives the lower bound on the size of the $k$-core of $\mathcal{G}[K]$. Since, in the extreme case, the interdiction can lead to an empty $k$-core, we set $M_i = |K|$, resulting into 
\begin{align}\label{eq:bigM_total}
	z \geq | K| \left[ 1 - \sum_{i \in K} w_i \right] \quad \forall\, K \in \mathcal{K}.
\end{align}

Alternatively, since the upper-level decisions are binary, we can reformulate constraint~\eqref{eq:bilconstr} using the following $\binom{n}{b}$ many no-good-cuts: 
\begin{equation}\label{eq:nogoodcut_total}
z \ge \left| C_k(\mathcal{G}\backslash W)\right|\left[ \sum_{i \in W} w_i - b + 1 \right], \qquad 
\forall\, W \in \Omega.
	\end{equation} 
Indeed, for any given $W$, 
the cardinality of 
the $k$-core $C_k(\mathcal{G}\backslash W)$, when the nodes from $W$ are removed by the leader, provides a valid lower bound on $z$. If at least one of the nodes in $W$ is not a collapser, the related constraint of type \eqref{eq:nogoodcut_total} turns out to be redundant, since the right hand side becomes less than or equal to 0.

In Subsection~\ref{subsec:separation_bil}, we will present a separation procedure to solve the single-level reformulation of \eqref{eq:formulation_bil1} given as 
\begin{subequations}\label{eq:singlewithcuts}
	\begin{align}
		\min_{z \in \mathbb{Z},w \in \mathcal{W}} &\;z\\
		\text{s.t. \eqref{eq:bigM_total}, \eqref{eq:nogoodcut_total}} 
	\end{align}
\end{subequations}
which has exponentially many constraints.

\subsection{A second bilevel formulation}
In this section, we propose an alternative bilevel formulation by considering lower-level variables $u_i$ defined in \eqref{eq:kcore2-var}, complementary with respect to\ variables $y_i$, and formulating the lower-level problem as in 
\eqref{eq:kcore2-lin}.

We recall that $u_i$ represent the lower-level variables identifying the nodes not belonging to the collapsed $k$-core of the graph:
\begin{displaymath}
	u_i = \begin{cases}
		1 &\text{if node $i$ does not belong to the collapsed $k$-core of the follower}\\
		0 &\text{otherwise}
	\end{cases}
\end{displaymath}
We remark that variable $u_i$ corresponds to $1-y_i$ in the former bilevel formulation. 
Indeed, in the example shown in Figure~\ref{fig:example_2}, $u_{i} = 1$ for $i=\{1,2,3,4,5,8,9\}$, and $u_{i} = 0$ for $i=\{6,7,10,11,12,13,14\}$.

The problem of detecting the $k$-core of the graph, given a leader's decision $w \in \mathcal{W}$, can be modelled as the problem of determining the set of nodes outside of the $k$-core:
\begin{subequations}\label{eq:degen}
\begin{align}
 \Psi(w) := 
 \min_{u} &\; \sum_{i \in V} u_i & \label{eq:obj-deg}\\
 \text{s.t.} & \; \sum_{j \in N(i)} u_j + k - |N(i)| \leq k\revised{\cdot}u_i &\forall\,i \in V \label{eq:degeneracy}\\ 
 &\;w_i \leq u_i & \forall\,i \in V\label{eq:interdiction}\\
 & \; u \in \{0,1\}^{n} & 
\end{align}
\end{subequations}

This formulation differs from formulation~\eqref{eq:kcore2} only for constraints~\eqref{eq:interdiction}, which state that a node $i$ cannot be in the $k$-core if it is interdicted/removed. Note that problem~\eqref{eq:degen} may be equivalently written as a maximization problem, with objective function $n - \sum_{i \in V} u_i$, as in \eqref{eq:kcore2-obj}.

The corresponding bilevel formulation of the Collapsed $k$-Core Problem is:
\begin{subequations}\label{eq:formulation_bil2}
\begin{align}
 \min_{ v \in \mathbb{Z},w\in \{0,1\}^{n}} & \; n - v \label{eq:obj-deg-bil}\\
 \text{s.t.} & \; v \leq \Psi(w) \label{eq:ll-deg-bil}\\
 &\sum\limits_{i \in V} w_i = b. \label{eq:budgetbil2}
\end{align}
\end{subequations}
The objective function expresses the fact that we want to find the minimal collapsed $k$-core, computed by solving $\Psi(w).$

In Subsection~\ref{subsec:kcore2}, we proved that problem~$\Psi(w)$ can be reformulated as the following LP formulation:
\begin{subequations}\label{eq:LP}
\begin{align}
 \min_{u} &\; \sum_{i \in V} u_i & \label{eq:lin-obj}\\
 \text{s.t.} & \sum_{j \in N(i)} u_j + k - |N(i)| \leq \sum\limits_{j \in N(i)}x_{ij} + (k - |N(i)|)u_i &\forall\,i \in V \label{eq:lin-1}\\ 
 &\; x_{ij} \leq u_i & \forall\,i \in V, j \in N(i) \label{eq:lin-2}\\
 &\; x_{ij} \leq u_j & \forall\,i \in V, j \in N(i)\label{eq:lin-3}\\
 &\; u_i \geq w_i & \forall\,i \in V \label{eq:lin-4}\\
 &\; u \in [0,1]^n, \; x\in\mathbb{R_+}^{|E|}
\end{align}
\end{subequations}
The addition of constraints~\eqref{eq:lin-4}, indeed, has no impact on the proof of Theorem~\ref{th:integr}. Since formulation~\eqref{eq:LP} is linear in the variables $x$, and $u$, we can replace it by its dual, as detailed in the following section. 

\subsubsection{Compact nonlinear formulation}\label{subsubsec:dual}
An approach to deal with the bilevel formulation \eqref{eq:formulation_bil2} consists in dualizing the lower level continuous formulation~\eqref{eq:LP}. Let us define the following dual variables for all $i \in V$: $\alpha_i$ associated with the constraints~\eqref{eq:lin-1}; $\beta_{ij}, \forall\,j \in N(i)$ associated with the constraints~\eqref{eq:lin-2}; $\gamma_{ij}, \forall\,j \in N(i)$ associated with the constraints~\eqref{eq:lin-3}; $\lambda_i$ associated with the constraints~\eqref{eq:lin-4}; $\tau_i$ associated with the constraints $u_i\leq 1.$

The dual of the lower-level problem \eqref{eq:LP} is:
{\small\begin{subequations}\label{eq:dualref_pre}
	\begin{align}
  &	\max_{\alpha,\beta,\gamma,\lambda,\tau} \;\sum_{i \in V}\left[(k-|N(i)|)\alpha_i +w_i\lambda_i-\tau_i\right] \\
  & (k-|N(i)|)\alpha_i + \lambda_i - \tau_i +\sum_{j \in N(i)}(-\alpha_j +\beta_{ij} + \gamma_{ji}) \leq 1 \quad \forall\,i \in V \label{eq:dual1}\\ 
 & \alpha_i - \beta_{ij} - \gamma_{ij} \leq 0 \quad\quad\quad \forall\,i \in V, j \in N(i)  \label{eq:dual2}\\
 & \alpha_i,\lambda_i,\tau_i \geq 0 \quad\quad\quad\quad\quad\;\, \forall\, i \in V  \label{eq:dual3}\\
 & \beta_{ij},\gamma_{ij} \geq 0 \quad\quad\quad\quad\quad\quad \forall\,i \in V, j \in N(i)  \label{eq:dual4}
 \end{align}
 \end{subequations}} 

For any value of $w$, problem~\eqref{eq:LP} \textit{(i)} admits at least one feasible solution, 
\textit{(ii)} is bounded because both variables $x$ and $u$ are bounded. Thus, strong duality holds between problem~\eqref{eq:LP} (the LP relaxation of $\Psi$) and its dual~\eqref{eq:dualref_pre}. 
Given what we discussed before, in \eqref{eq:formulation_bil2}, we can replace $\Psi(w)$ by its linear relaxation~\eqref{eq:LP} since their optimal values are the same as proved in Theorem~\ref{th:integr}, and then replace problem~\eqref{eq:LP} by \eqref{eq:dualref_pre}, since their optimal values are the same by strong duality.
We can further drop the maximum operator, obtaining the following single-level formulation:
\begin{subequations}\label{eq:dualref}
	\begin{align}
	\min_{v,w,\alpha,\beta,\gamma,\lambda,\tau} & n - v\\
	\text{s.t.} &\; v \leq \;\sum_{i \in V}\left[(k-|N(i)|)\alpha_i +w_i\lambda_i-\tau_i\right] \label{eq:nonlin}\\
	& \sum\limits_{i \in V} w_i = b \\
 & \eqref{eq:dual1}-\eqref{eq:dual4}\\
 & v \in \mathbb{Z}, \;w \in \{0,1\}^n. 
 \end{align}
\end{subequations} 
This single-level formulation is a Mixed-Integer Nonlinear Programming (MINLP) problem, that has 
bilinear terms in \eqref{eq:nonlin}, given by $\sum_{i \in V} w_i\lambda_i$. One could linearize these bilinear terms using again McCormick reformulation, and/or applying other specialized techniques.
However, most of the these state-of-the-art techniques are integrated in modern MINLP solvers, thus we decided to hand over the compact model~\eqref{eq:dualref} as it is to the solver used in the experiments (see Section~\ref{sec:numerical-experiments} for details).
		
\section{Valid inequalities}\label{sec:valid_ineq}
In this section, we describe different classes of valid inequalities, which are used to strengthen the single-level formulations presented above. \revised{Some of them are valid for all the feasible solutions, while others cut off parts of the feasible domain due to symmetries or dominance conditions}. We point out that an initial \emph{pre-processing} procedure is applied to $\mathcal{G}$ which consists of removing all nodes not belonging to its $k$-core.

\subsection{Dominance and symmetry breaking inequalities}\label{subsec:preprocessing}
For any node $u\in V$, we can compute the $k$-core of $\mathcal{G} \setminus \{u\}$ and define $J_u$ as the set of nodes, including $u$ itself, which leave the graph when node $u$ is removed (say, the \textit{followers} of $u$, not to be confused with the follower agent solving the lower level):
\[J_u = \{v\in V : v\notin C_k(\mathcal{G}\backslash\{u\})\}.\]
In the example graph in Figure~\ref{fig:starting_graph}, as shown in Figures~\ref{fig:example_1} and \ref{fig:example_2}, with $k=3$, $J_{13}=\{13\},$ while $J_{1}=\{1, 2, 3, 4, 5, 8, 9\}$.
Using these sets, defined for each node in the graph, we can add dominance inequalities to our formulations. 
In the same example as before, $J_3=\{2, 3, 4, 5\}$ and we can observe that $J_3 \subset J_1$, meaning that every node that leave the network when node 3 is removed, would also leave the network when node 1 is removed.
In general, if $J_i \subset J_j$, i.e., the set of followers of node $i$ is strictly contained in the set of followers of node $j$, node $j$ should be removed first.
This can be imposed adding to the time-dependent formulation~\eqref{eq:formulation_td} the following inequalities:
\begin{equation}
 \label{eq:dominance-td}
 a_j^0 \le a_i^0 \qquad \forall\, i,j \in V : J_i \subset J_j
\end{equation}
which corresponds to adding the following inequalities to formulations~\eqref{eq:singlewithcuts}, and \eqref{eq:dualref}:
\begin{equation}
 \label{eq:dominance-bl}
 w_j \ge w_i\qquad \forall\, i,j \in V : J_i \subset J_j. 
\end{equation}
An additional family of valid inequalities is related to breaking symmetries among nodes which have the same set of followers. Let $S=\{i_1,\dots,i_{|S|}\}$ be an inclusion-wise maximal subset of nodes such that $J_{i_r} = J_{i_s}$ for all $i_r, i_s \in S$, i.e., $S$ contains all the nodes of the graph having a given set of followers, and there exist no superset of $S$ the nodes of which have the same set of followers. For example, in the graph in Figure~\ref{fig:starting_graph}, when $k=3$, nodes $4$ and $5$ have the same set of followers which is $\{4,5\}$ (if $4$ leaves the network, $5$ leaves it too and vice versa), thus a possible set $S$ is $\{4,5\}$.
Let the indices $i_r$, with $r\in\{1,\dots,|S|\}$, be given in increasing order.
Then we can break the symmetries by imposing that the node with the lowest index is removed first, i.e., 
\begin{equation}
 \label{eq:symmetry-td}
 a_{i_1}^0 \le \dots \le a_{i_{|S|}}^0 \qquad \forall\, S \subset V : J_{i_r} = J_{i_s}, \forall\, i_r, i_s \in S
\end{equation}
for the time-dependent formulation~\eqref{eq:formulation_td}, and 
\begin{equation}
 \label{eq:symmetry-bl}
 w_{i_1} \ge \dots \ge w_{i_{|S|}} \qquad \forall\, S \subset V : J_{i_r} = J_{i_s}, \forall\, i_r, i_s \in S
\end{equation}
for the single-level formulations~\eqref{eq:singlewithcuts} and \eqref{eq:dualref}.

\subsection{Valid inequalities to consider the cascade effect}
\label{subsec:cascade}
We present in this section a family of valid inequalities, related to the nodes which leave the network as a consequence of a single node or a set of nodes leaving. Such inequalities can be added to both the time-dependent formulation~\eqref{eq:formulation_td} and the leader's problem of the two bilevel formulations.

For any node $u\in V$, we can compute the $k$-core of $\mathcal{G} \setminus \{u\}$ and the set of followers $J_u$.
Given that by removing $u$, all nodes in $J_u$ will be removed as well, we can add a valid inequality stating that at most one node should be removed from $J_u$, that is:
\begin{equation}\label{eq:nofollowers-td}
\sum_{j\in J_u}a_j^0\geq |J_u|-1 \qquad \forall\, u \in V,
\end{equation}
for the time-dependent formulation, and 
\begin{equation}\label{eq:nofollowers-bl}
\sum_{j\in J_u}w_j \leq 1 \qquad \forall\, u \in V,
\end{equation}
for the formulations~\eqref{eq:singlewithcuts}, and \eqref{eq:dualref}.

\revised{Indeed, if there exists a feasible solution such that $u$ is not a collapser, but some of its followers are, i.e., $w_u = 0$ and $w_j =1$ for some $j \in J_u\setminus\{u\}$, then there exists an alternative solution with an objective value which is at least as good as the one of the former solution, where $u$ is the only collapser in $J_u$. Such a solution can be obtained by replacing all collapsed nodes $j$ by $u$, resulting in at least the same number of leaving nodes.}

We assume that the removal of less than $b$ nodes (together with the related followers) is not enough to empty the network. 
Under this assumption, Ineqs.~\eqref{eq:budget_td}, \eqref{eq:budget_bil}, \eqref{eq:budgetbil2}, requiring that exactly $b$ nodes are removed, as well as Ineqs.~\eqref{eq:bigM_total} and \eqref{eq:nogoodcut_total}, remain still valid when introducing constraints~\eqref{eq:nofollowers-td} and \eqref{eq:nofollowers-bl}. \revised{If instead this condition is not satisfied, then  inequalities~\eqref{eq:nofollowers-td} and \eqref{eq:nofollowers-bl} are no longer valid. We note that in social networks applications, this assumption is typically satisfied as $b<<n$.}

Inequalities~\eqref{eq:nofollowers-td} and \eqref{eq:nofollowers-bl} can be generalized to the case in which more than a single node is removed. Let $S \subseteq V$, with $|S| < b$, be such set of nodes. Assume
\begin{displaymath}
|C_k(\mathcal{G}\backslash S)| \ge b - |S|
\end{displaymath} holds, i.e., in the remaining $k$-core there are enough nodes to remove according to the budget left. The set $J_S$ of followers of $S$ (including $S$) can be defined as follows
\[
J_S = \{j\in V : j\notin C_k(\mathcal{G}\backslash S) \}
\]
and the inequality~\eqref{eq:nofollowers-td} is generalized into:
\begin{equation}
\label{eq:generalnofollowers-td}
\sum_{j\in J_S} a_j^0 \geq |J_S| - |S| \qquad \forall\, S \subseteq V: |S| < b, 
\end{equation}
while the inequality~\eqref{eq:nofollowers-bl} into:
\begin{equation}
\label{eq:generalnofollowers-bl}
\sum_{j\in J_S} w_j \leq |S| \qquad \forall\, S \subseteq V: |S| < b.
\end{equation}

The number of constraints~\eqref{eq:nofollowers-td} and \eqref{eq:nofollowers-bl} is equal to the number of nodes in the graph. For each node $u \in V$, the set $J_u$, i.e., the followers of $u$, can be easily obtained by computing the $k$-core of $\mathcal{G} \setminus \{u\}$ and the related constraint can be added to the model.
Instead, the number of constraints~\eqref{eq:generalnofollowers-td} and \eqref{eq:generalnofollowers-bl} is $\binom{n}{b-1}$.
Thus a separation routine is required.

\subsection{Lower bound on the solution value}\label{subsec:lowerbound}
Let $\{L_i\}_{i \in \{k,\dots,\ell\}}$ be a series of layers, each one containing the nodes of $\mathcal{G}$ with coreness equal to $i$, with $i$ being at least $k$ and at most $\ell$, where $\ell$ is the maximum coreness of any node in the graph.
For instance, in the graph in Figure~\ref{fig:example_coreness}, with $k=1$, we have $\ell = 3$, $L_1$ contains the cyan nodes, $L_2$ contains the green nodes, while $L_3$ contains the orange nodes. 

Let us consider the $h$-core $\bigcup_{i \in \{h,\dots,\ell\}} L_i$, where $h = k + b$. Even by removing any subset of $b$ nodes from such $h$-core, the remaining nodes still constitute a $k$-core. The size of the remaining $k$-core is a valid lower bound to the solution value of the Collapsed $k$-Core Problem.
Let us denote as $m := \left|\bigcup\nolimits_{i \in \{h,\dots,\ell\}} L_i \right| - b$ this lower bound. 
This means that we can restrict the set $\mathcal{Y}$ of the incident vectors of all the $k$-subcores of the graph (over which we optimize the lower-level problem) as follows:
\begin{equation}
		\mathcal{\tilde{Y}} = \left\{ y \in \{0,1\}^n \in \mathcal{Y}: \sum_{j \in \bigcup\nolimits_{i \in \{h,\dots,\ell\}} L_i} (1-y_j) \le b \right\}.
\end{equation}
Indeed the number of nodes which are not in the feasible $k$-cores belonging to the layers $\bigcup_{i \in \{h,\dots,\ell\}} L_i$ will not be greater than the budget $b$.

This corresponds to adding the following constraint to the time-dependent model~\eqref{eq:formulation_td}
 \begin{equation}\label{eq:lower_bound-td}
  \sum\limits_{i \in V} a_i^T \ge m.
 \end{equation}
Furthermore, a tighter upper bound on the number of deletion rounds can be defined as $T = n-b-m$. 

Similarly constraint
\begin{equation}\label{eq:lower_bound-bl1}
  z \ge m, 
 \end{equation}
can be added to formulation~\eqref{eq:singlewithcuts}, and \begin{equation}\label{eq:lower_bound-bl2}
  v \le n- m, 
 \end{equation}
to model~\eqref{eq:dualref}.
 
According to the defined lower bound, in a similar fashion as it is done in stochastic integer programming (see, e.g. \citet{LaporteL93}), we can also tighten the constraints of the sparse formulation~\eqref{eq:singlewithcuts},
presented in Section~\ref{subsubsec:sparse}.
Inequalities~\eqref{eq:bigM_total} can be restated as follows:
\begin{equation}\label{eq:bigMcut}
 z \geq m + {(|K| - m) } \Bigg[ 1 - \sum_{i \in 
 K} w_i \Bigg] \qquad \forall\,K \in \mathcal{K}: |K| > m 
\end{equation}

and inequalities \eqref{eq:nogoodcut_total} as follows:
\begin{equation}\label{eq:nogoodcut}
	z \ge m + (\left|C_k(\mathcal{G}\backslash W) \right| - m) \Bigg[ \sum_{i \in W} w_i - b + 1 \Bigg] \qquad \forall\, W \in \Omega.	\end{equation}

\subsection{Valid inequalities derived from $k$-subcores}\label{subsec:k-core}
For a given $K \in \mathcal{K}$, assume that $\delta(\mathcal{G}[K]) \geq k+1$ (the degree of nodes in the subgraph of $\mathcal{G}$ induced by $K$ is at least $k+1$). Assume we are given an interdiction policy $\tilde W \subset \Omega$ such that 
at most one of the nodes in $K$ is 
interdicted, then we have that $|C_k(\mathcal{G}\backslash\tilde W)| \geq |K|-1.$
Thus we can impose:
\begin{equation}\label{eq:hcores_bil}
 z \ge m + (|K|-1-m) \left[1-\sum_{i 
 \in K}\frac{w_i}{2}\right]
 \qquad \forall\, K \in \mathcal{K} : 
 \delta(\mathcal{G}[K]) \geq k+1
\end{equation}
in formulation~\eqref{eq:singlewithcuts}.

This can be easily generalized to the case in which $\delta(\mathcal{G}[K]) = h \geq k+1$ as follows:
\begin{equation}
 \label{eq:generalized_hcores_bil}
 z \ge m + (|K|-h+k-m) \left[ 1- \sum_{i \in K}\frac{w_i}{h-k+1}\right] \quad \forall\, K \in \mathcal{K} : 
 \delta(\mathcal{G}[K]) = h \geq k+1.
\end{equation} 
\revised{This means that, if at most $h-k$ nodes are removed from $K$, then the objective function value $z$ is lower bounded by $|K|-h+k$. Indeed, the nodes of the $h$-subcore which remain in the network will still have more than $k$ neighbours.}
\section{Separation procedures}\label{sec:separation-procedures}
The inequalities~\eqref{eq:dominance-td}, \eqref{eq:dominance-bl}, \eqref{eq:symmetry-td}, and \eqref{eq:symmetry-bl} introduced in Subsection~\ref{subsec:preprocessing}, as well as the inequalities~\eqref{eq:nofollowers-td} and \eqref{eq:nofollowers-bl} modeling the cascade effect following the leaving of a single node, introduced in Subsection~\ref{subsec:cascade}, and the ones related to the combinatorial lower bound $m$, i.e., \eqref{eq:lower_bound-td}, \eqref{eq:lower_bound-bl1} and \eqref{eq:lower_bound-bl2}, introduced in Subsection~\ref{subsec:lowerbound}, are added to the corresponding models during the initialization phase as they are in polynomial number. 
Specifically, we add the following $|V|+1$ inequalities: the $|V|$ inequalities~\eqref{eq:nofollowers-td}, or \eqref{eq:nofollowers-bl}, and the inequality \eqref{eq:lower_bound-td}, or \eqref{eq:lower_bound-bl1}, or \eqref{eq:lower_bound-bl2} (just one inequality for each model). As regards inequalities~\eqref{eq:dominance-td}, \eqref{eq:dominance-bl}, \eqref{eq:symmetry-td}, and \eqref{eq:symmetry-bl}, we add them from the beginning following an heuristic procedure here described. After computing the set of followers $J_j$ for each node $j \in V$, a dominance inequality of type~\eqref{eq:dominance-td} or~\eqref{eq:dominance-bl} is added to the corresponding formulation for each node $i \in J_j$ such that $J_i \subset J_j$.
Similarly, a partitioning $\mathcal{P}$ of the set of nodes $V$ into at most $|V|$ disjoint subsets is constructed, by iteratively assigning each node $i \in V$ to the subset which contains nodes having exactly the same followers of node $i$. Formally, for any $S$ in $\mathcal{P},$ $J_i = J_j, \forall\; i,j \in S$ and $J_i \neq J_j, \forall\; i \in S, \forall\; j \notin S$.
Hence, a symmetry breaking inequality of type~\eqref{eq:symmetry-td} or~\eqref{eq:symmetry-bl} is added to the appropriate formulation for each set $S$ of the partition $\mathcal{P}$.

Instead, the other valid inequalities introduced in Section~\ref{sec:valid_ineq} need a procedure to be separated.
In this section, we first present the separation procedure associated with compact formulations \eqref{eq:formulation_td} and \eqref{eq:dualref} and used to separate cuts~\eqref{eq:generalnofollowers-td} and~\eqref{eq:generalnofollowers-bl}. We then describe the separation procedures for the non-compact formulation~\eqref{eq:singlewithcuts} used to separate constraints~\eqref{eq:bigMcut}, \eqref{eq:nogoodcut} and inequalities~\eqref{eq:generalnofollowers-bl}, and \eqref{eq:generalized_hcores_bil}. We note that separation is made on integer solutions only \revised{by using the specific \textit{lazyconstraints} separation procedure provided by the commercial solver used in the experiments (Gurobi, in our case). Note that we decided to separate these inequalities on integer solutions only in order to avoid too many calls of the separation procedure (on fractional solutions) and speed-up the solution process}.

\subsection{Separation procedures for the compact formulations}\label{subsec:separation_compact}
A heuristic procedure for detecting violated inequalities~\eqref{eq:generalnofollowers-bl} added to formulation~\eqref{eq:dualref} is here described.

\begin{enumerate}
 \item Consider an interdiction policy $\bar{w}$ of the leader and let $\bar W$ denote the related set of collapsers;
 \item For each node $j \in \bar W$ do the following:
  \begin{enumerate}
   \item Compute the $k$-core of $\mathcal{G} \backslash \bar W \cup \{j\}$, i.e., $C_k(\mathcal{G} \backslash \bar W \cup \{j\})$;
   \item If $j \notin C_k(\mathcal{G} \backslash \bar W \cup \{j\})$, then $j \in J_{ \bar W \setminus \{j\}}$ and add a violated inequality of type~\eqref{eq:generalnofollowers-bl} with $S= \bar W \setminus \{j\}$ to formulation \eqref{eq:dualref}.
 \end{enumerate}
\end{enumerate}
Given a set $\bar W$ of collapsers, this routine is able to identify violated constraints of type~\eqref{eq:generalnofollowers-bl} where $S$ is given by the set of collapsers $\bar W$ excluding exactly one of them.

An analogous separation procedure is used to separate constraints~\eqref{eq:generalnofollowers-td} for formulation~\eqref{eq:formulation_td}, where we consider variables $\bar{a}^0$ and the related set $\bar A$ of collapsers instead of $\bar{w}$ and set $\bar W$ used in the procedure above.

\subsection{Separation procedures for the single-level formulation~\eqref{eq:singlewithcuts}}\label{subsec:separation_bil}
In the following, we explain how to heuristically separate constraints~\eqref{eq:generalnofollowers-bl}, \eqref{eq:bigMcut}, \eqref{eq:nogoodcut} and \eqref{eq:generalized_hcores_bil}. 
The procedure to separate~\eqref{eq:generalnofollowers-bl} is the same as the one described above. We repeat it here for the ease of reading.

We start initializing the relaxation of problem~\eqref{eq:singlewithcuts}, obtained by dropping constraints~\eqref{eq:bigM_total} and \eqref{eq:nogoodcut_total}.
Then, every time a feasible integer solution $\hat w$ of such relaxation is found, we compute the corresponding $C_k(\mathcal{G}\backslash \hat W)$ and:
  \begin{enumerate}
  \item \label{step_followers} For each node $j \in \hat W$ do the following:
    \begin{enumerate}
      \item Compute the $k$-core of
       $\mathcal{G} \backslash \hat W \cup \{j\}$, i.e., $C_k(\mathcal{G} \backslash \hat W \cup \{j\})$;
      \item If $j \notin C_k(\mathcal{G} \backslash \hat W \cup \{j\})$, i.e., $j \in J_{ \hat W \backslash \{j\}}$, add a violated inequality of type~\eqref{eq:generalnofollowers-bl} with $S= \hat W \backslash \{j\}$.
    \end{enumerate}
  \item \label{step_check} If no violated inequality of type~\eqref{eq:generalnofollowers-bl} is identified, go to step~\ref{step_bigM}, otherwise go to step~\ref{step_U}.
  \item \label{step_bigM} If $z < |C_k(\mathcal{G}\backslash\hat W)|$, add to the current relaxation a cut of the family~\eqref{eq:bigMcut} with $K=C_k(\mathcal{G}\backslash\hat W)$ and a cut of the family~\eqref{eq:nogoodcut} with $W = \hat W.$ 
  \item \label{step_U} Set $U = \emptyset$, and iteratively perform the following steps:
  \begin{enumerate}
    \item Select a node $u \in V\setminus U$ to remove and set $U := U \cup \{u\}$;
    \item Compute the $k$-core of $C_k(\{\mathcal{G}\backslash \hat W\} \setminus U)$;
    \item If $|C_k(\{\mathcal{G}\backslash \hat W\} \setminus U)| > m$, and $z < |C_k(\{\mathcal{G}\backslash \hat W\} \setminus U)|$ add a cut of the family~\eqref{eq:bigMcut} with $K = C_k(\{\mathcal{G}\backslash \hat W\} \setminus U)$. Otherwise, go to step 5. 
    \item If $|U|$ is over a given threshold, go to step 5.
    \end{enumerate}  
  \item\label{step_h}For $h \in \{k+1,\dots,\ell\}$: 
  \begin{enumerate}
    \item consider the set $K=\bigcup_{h \le i \le \ell} L_i$ of nodes of $C_k(\mathcal{G}\backslash \hat W)$ having coreness at least $h$.
    \item If $|K| \ge m$, then add cut~\eqref{eq:generalized_hcores_bil}. 
    \end{enumerate}
  \end{enumerate}
At step~\ref{step_followers} of the above presented procedure, we check if the collapsers are all \textit{really useful}. Indeed, we verify whether each $j \in \hat W$ is a follower of the other nodes in $\hat W$; if this is the case, removing $j$ is not useful for the leader: it will anyway disappear as follower of the other collapsers.

At step~\ref{step_check}, we verify if it is needed to perform step~\ref{step_bigM}. Indeed, if at least one of the inequalities~\eqref{eq:generalnofollowers-bl} has been added to the relaxation, there is no need to cut off the current solution by means of \eqref{eq:bigMcut} and \eqref{eq:nogoodcut}, being this solution already excluded by adding constraints of type~\eqref{eq:generalnofollowers-bl}.

At step~\ref{step_U}, we add a certain number (at most $|U|$) of Bender's like cuts of type~\eqref{eq:bigMcut} with $K$ of increasingly smaller dimension.
In order to obtain diversified sets of valid inequalities, nodes in $V \setminus U$ are selected in each iteration of step~\ref{step_U} according to decreasing order of the number of previously added constraints in which they are involved. This heuristic selection procedure means that the more a node is \textit{involved} in the previous steps, the less it will be considered in step~\ref{step_U}. 

At step~\ref{step_h}, at most $\ell-k$ inequalities of type~\eqref{eq:generalized_hcores_bil} are added. In particular, for any given $h \in \{\revised{k+}1,\dots,\ell\}$, the set of nodes in $C_k(\mathcal{G}\backslash \hat W)$ having core number at least $h$ is computed and used as the set $K$ in \eqref{eq:generalized_hcores_bil}.

\section{Numerical experiments}\label{sec:numerical-experiments}
In this section, we analyse the computational performance of the following four exact approaches:
\begin{itemize}
  \item Time-Dependent Model, corresponding to the compact ILP formulation~\eqref{eq:formulation_td}, see Section~\ref{subsec:timedep}.
  \item Nonlinear Model, corresponding to the compact MINLP formulation~\eqref{eq:dualref} presented in Section~\ref{subsubsec:dual}.
  \item Sparse Model, corresponding to the non-compact formulation~\eqref{eq:singlewithcuts} (cf.\ Section~\ref{subsubsec:sparse}) for which we implemented a Branch\&Cut (B\&C) approach.
  \item Bilevel Solver, corresponding to the bilevel formulation \eqref{eq:formulation_bil1} which is solved using a general purpose intersection-cut based solver for Mixed-Integer Bilevel Linear Problems proposed in \citep{ljubic2017}. 
\end{itemize}

On the one hand, the two compact formulations~\eqref{eq:formulation_td} and \eqref{eq:dualref} are solved using a state-of-the-art MINLP solver together with the separation procedures proposed in Subsection~\ref{subsec:separation_compact}. On the other hand, formulation~\eqref{eq:singlewithcuts} is solved using a Branch\&Cut method which iteratively builds the feasible set of the original bilevel formulation~\eqref{eq:formulation_bil1}, by adding cuts of type \eqref{eq:bigM_total}, \eqref{eq:nogoodcut_total} as well as separating the inequalities proposed in Section~\ref{sec:valid_ineq} through the separation procedure illustrated in Subsection~\ref{subsec:separation_bil}. 
The bilevel formulation~\eqref{eq:formulation_bil1} is instead solved as it is through the general purpose algorithm proposed in \citep{ljubic2017}.

The proposed formulations were implemented in Python 3.8 
and solved by using the Gurobi solver (version 9.5.2). The bilevel solver of \citet{ljubic2017} uses Cplex 12.7. 
The separation procedures presented in the paper are implemented within lazy callbacks, with the threshold on $|U|$ used in step 4d of separation procedure presented in Subsection~\ref{subsec:separation_bil} set to 10, and $\ell$ set to $k+2$.

All the experiments were conducted in single-thread mode, on a 2.3 GHz Intel Xeon E5 CPU, 128 GB RAM. A time limit of two hours of computation and a memory limit of 10 GB were imposed for every run.

\subsection{Benchmark Instances}\label{subsec:benchmarkinstances}
In order to test and compare the performances of the discussed methods, a set of {136} instances was arranged starting from {14} different networks collected from the literature. All the instances files are collected in the online public repository 
\url{https://bit.ly/collapsed-k-core}.

For each network, several combinations of values for $k$ and $b$ were selected by analysing the core number distribution of the nodes.
Table~\ref{table:instances_details} reports, for each network, the bibliographic source from which it was collected, the number of its nodes, the number of its edges, the different selected values for $k$,
as well as the associated sizes of the network (nodes and edges) after pre-processing and, finally, the selected values for the budget $b$.

\begin{table}[h!]
\centering
\resizebox*{!}{1\textwidth}{
\begin{tabular}{@{}ccccccc@{}}
\toprule \rowcolor[HTML]{EFEFEF} 
\textbf{network}    & \textbf{\#nodes}    & \textbf{\#edges}    & \textbf{k} & \textbf{\#nodes after pre-processing} & \textbf{\#edges after pre-processing} & \textbf{budget}\\ \midrule
\multirow{4}{*}{adjnoun\;\citep{Newman2006}}& \multirow{4}{*}{112}          & \multirow{4}{*}{425}           & \multicolumn{1}{c}{5}     & 63      & 298      & \{3\}   \\
{}     & {}  & {}  & 4    & 79      & 359      & \{3, 4, 5\}  \\
{}     & {}  & {}  & 3    & 89      & 389      & \{3, 4, 5\}  \\
{}     & {}  & {}  & 2    & 102      & 415      & \{3, 4, 5\}  \\ \midrule
\multirow{3}{*}{as-22july06 \citep{RouteViewsArchive}} & \multirow{3}{*}{22963} & \multirow{3}{*}{48436} & \multicolumn{1}{c}{15}     & 168      & 3115     & \{3, 4, 5\}   \\
& &  & 10    & 322      & 4845     & \{3, 4, 5\}  \\
& &  & 5    & 1087     & 9493     & \{3, 4, 5\}  \\ \midrule
\multirow{3}{*}{astro-ph \citep{Newman2001}}  & \multirow{3}{*}{16706} & \multirow{3}{*}{121251} & \multicolumn{1}{c}{42}     & 400      & 10552     & \{3, 4, 5\}   \\
& &  & 32    & 936      & 23433     & \{3, 4, 5\}  \\
& &  & 28    & 1393     & 32375     & \{3, 4, 5\}  \\ \midrule
\multirow{4}{*}{cond-mat \citep{Newman2001}}  & \multirow{4}{*}{16726} & \multirow{4}{*}{47594} & \multicolumn{1}{c}{9}     & 943      & 6573     & \{3, 4, 5\}   \\
& &  & \multicolumn{1}{c}{8}     & 1487     & 9544     & \{3, 4, 5\}   \\
& &  & \multicolumn{1}{c}{7}     & 2227     & 13280     & \{3, 4, 5\}   \\
& &  & \multicolumn{1}{c}{6}     & 3442     & 18713     & \{3, 4, 5\}   \\ \midrule
\multirow{4}{*}{cond-mat-2003 \citep{Newman2001} }        & \multirow{4}{*}{31163}         & \multirow{4}{*}{120029}         & \multicolumn{1}{c}{13}     & 1132     & 12732     & \{3, 4, 5\}   \\
{}     & {}  & {}  & 12    & 1609     & 17327     & \{3, 4, 5\}  \\
{}     & {}  & {}  & 10    & 2901     & 28339     & \{3, 4, 5\}  \\
{}     & {}  & {}  & 9    & 4071     & 36920     & \{3, 4, 5\}  \\ \midrule
\multirow{4}{*}{cond-mat-2005 \citep{Newman2001} }        & \multirow{4}{*}{40421  }       & \multirow{4}{*}{175692     }     & \multicolumn{1}{c}{14}     & 1793     & 24595     & \{3, 4, 5\}   \\
{}     & {}  & {}  & 13    & 2151     & 28640     & \{3, 4, 5\}  \\
{}     & {}  & {}  & 12    & 2808     & 35214     & \{3, 4, 5\}  \\
{}     & {}  & {}  & 11    & 3555     & 42346     & \{3, 4, 5\}  \\ \midrule
\multirow{3}{*}{dolphins \citep{Lusseau2003}}  & \multirow{3}{*}{62}  & \multirow{3}{*}{159}  & \multicolumn{1}{c}{4}     & 36      & 109      & \{3\}   \\
& &  & 3    & 45      & 135      & \{3, 4, 5\}   \\
& &  & 2    & 53      & 150      & \{3, 4, 5\}   \\ \midrule
\multirow{2}{*}{football \citep{Girvan2002}  }         & \multirow{2}{*}{115  }        & \multirow{2}{*}{613    }       & \multicolumn{1}{c}{8}     & 114      & 606      & \{3, 4, 5\}   \\
{}     & {}  & {}  & 7    & 115      & 613      & \{3, 4, 5\}  \\ \midrule
\multirow{4}{*}{hep-th \citep{Newman2001}}   & \multirow{4}{*}{8361} & \multirow{4}{*}{15751} & \multicolumn{1}{c}{7}     & 137      & 885      & \{3, 4, 5\}   \\
& &  & 6    & 358      & 1847     & \{3, 4, 5\}  \\
& &  & 5    & 851      & 3775     & \{3, 4, 5\}  \\
& &  & 4    & 1735     & 6552     & \{3, 4, 5\}  \\ \midrule
karate \citep{Zachary1977} & 34           & 78& 2     & 33      & 77      & \{3, 4, 5\}   \\ \midrule
\multirow{4}{*}{lesmis \citep{Knuth1993}  }          & \multirow{4}{*}{77 }          & \multirow{4}{*}{254  }         & \multicolumn{1}{c}{6}     & 38      & 186      & \{3, 4, 5\}   \\
{}     & {}  & {}  & 4    & 41      & 197      & \{3, 4, 5\}  \\
{}     & {}  & {}  & 3    & 48      & 215      & \{3, 4, 5\}  \\
{}     & {}  & {}  & 2    & 59      & 236      & \{3, 4, 5\}  \\ \midrule
\multirow{4}{*}{netscience \citep{Newman2006}} & \multirow{4}{*}{1589} & \multirow{4}{*}{2742}  & \multicolumn{1}{c}{5}     & 247      & 976      & \{3, 4, 5\}   \\
& &  & 4    & 470      & 1511     & \{3, 4, 5\}  \\
& &  & 3    & 751      & 2045     & \{3, 4, 5\}  \\
& &  & 2    & 1141     & 2535     & \{3, 4, 5\}  \\ \midrule
\multirow{4}{*}{polbooks \citep{Krebs}}  & \multirow{4}{*}{105}  & \multirow{4}{*}{441}  & \multicolumn{1}{c}{5}     & 65      & 300      & \{3, 4\}   \\
& &  & 4    & 98      & 422      & \{3, 4, 5\}  \\
& &  & 3    & 103      & 437      & \{3, 4, 5\}  \\
& &  & 2    & 105      & 441      & \{3, 4, 5\}  \\ \midrule
\multirow{3}{*}{power \citep{Watts1998}}    & \multirow{3}{*}{4941} & \multirow{3}{*}{6594}  & \multicolumn{1}{c}{4}     & 36      & 106      & \{3, 4, 5\}   \\
& &  & \multicolumn{1}{c}{3}     & 231      & 479      & \{3, 4, 5\}   \\
& &  & \multicolumn{1}{c}{2}     & 3353     & 5006     & \{3, 4, 5\}   \\ 
\bottomrule
\end{tabular}
}
\caption{Detailed description of the instance set.}
\label{table:instances_details}
\end{table}

\subsection{Effectiveness of the Collapsed $k$-Core formulations}
The detailed results obtained by testing the four methods on the instances described in Subsection~\ref{subsec:benchmarkinstances} are available online at \url{https://bit.ly/collapsed-k-core}.
In the following, we report summary tables and charts which we use to compare the tested methods and formulations. 
Because of the imposed memory limits, the Time-Dependent Model solves only for 87 instances while, for the remaining 48, even the relaxation at the root node is not solved. For this reason, we summarize in Table~\ref{tab:summary87} the results obtained by testing the four methods on this subset of 87 instances, reporting for each of them: \#opt, the number of optimal solutions found by the method within the limits; LB, the average lower bound; UB, the average upper bound; $\text{gap}_{\text{LB}}$[{\footnotesize\%}], the average percentage gap, where the gap is calculated as $\frac{100*(\text{UB}-\text{LB})}{\text{UB}}$ per each instance; time[s], the average computing time in seconds; B\&C nodes, the average number of nodes of the branch-and-cut tree at termination; $\text{LB}_\text{r}$, the average lower bound computed by solving the relaxation at the root node; $\text{gap}_{\text{UB}_\text{best}}$[{\footnotesize\%}], the average gap with respect to the best known solution (dimension of the $k$-core), calculated as $\frac{100*(\text{UB}-\text{UB}_{\text{best}})}{\text{UB}_{\text{best}}}$ per each instance; $\text{gap}_{\text{LB}_\text{r}}$[{\footnotesize\%}], the average percentage gap with respect to the root bound, calculated as $\frac{100*(\text{UB}-\text{LB}_\text{r})}{\text{UB}}$ per each instance.

\begin{table}[H]
\begin{adjustbox}{width=1\textwidth, margin=1ex 1ex 1ex 1ex}
\begin{tabular}{r|rrrrrrrrr}
\rowcolor[HTML]{EFEFEF} 
\multicolumn{1}{c|}{\cellcolor[HTML]{EFEFEF}}        & \textbf{\#opt} & \textbf{LB}         & \textbf{UB}         & \textbf{$\text{gap}_{\text{LB}}$[{\footnotesize\%}]}    & \textbf{time[s]}        & \textbf{B\&C nodes}         & \textbf{$\text{LB}_\text{r}$}       & \textbf{$\text{gap}_{\text{UB}_\text{best}}$[{\footnotesize\%}]} & \textbf{$\text{gap}_{\text{LB}_\text{r}}$[{\footnotesize\%}]} \vspace*{1mm}\\ \hline
\cellcolor[HTML]{EFEFEF}\textbf{Time-Dependent Model} & \textbf{29}  & 84.6 & 207.0          & 41.2           & 5234           & 79084 & 68.7          & 4.66 & 71.5       \\
\cellcolor[HTML]{EFEFEF}\textbf{Sparse Model} & \textbf{44}  & 102.5 & 207.4          & 30.5           & 4045          & 30508 & 81.6           & 1.91 & 70.8      \\
\cellcolor[HTML]{EFEFEF}\textbf{Nonlinear Model}   & \textbf{52}  & {112.3} & {201.7} & {20.4} & {3347} & {1110881} & {81.6} & 0.38 & 70.4 \\      
\cellcolor[HTML]{EFEFEF}\textbf{Bilevel Solver}       & \textbf{26}  & 23.1 & 212.1           & 55.8          & 5297           & 18263 & 2.67 & 4.68 & 96.5      
\end{tabular}
\end{adjustbox}
\caption{Summary of the computational performance of the four exact methods
on the subset of 87 instances solved by the time-dependent model.}
\label{tab:summary87}
\end{table}

The results show a clear superiority of the Nonlinear Model, both in terms of time and solution quality. Indeed, the Nonlinear Model provides the highest number of optimal solutions among the tested methods, yielding 52 out of 87 instances solved to optimality. Furthermore, the average computing time required by the Nonlinear Model is considerably lower than the one required by the other formulations; also, the provided average final lower and upper bounds values and gaps are tighter. Specifically, the lower bound $m$ computed in the preprocessing procedure is $81.6$ on average. The improvement with respect to this value is much higher for the Nonlinear Model than for the other approaches which take this lower bound into account (i.e., all the approaches, but the bilevel solver).
On average, the number of nodes explored by the branch-and-cut approach solving the Nonlinear Model is greater than the number of nodes explored by the one solving the Sparse Model and the number of nodes explored by the Bilevel Solver. This reflects the fact that the problems considered at each node of the branch-and-cut tree solving the Nonlinear Model are easier to solve with respect to the ones of the other models, so that, in the same amount of time, more nodes are explored.

All the three proposed problem-specific methods exhibit better performing behaviors than the general purpose Bilevel Solver.
\revised{To better visualize this computational dominance, three summary charts related to the four methods solving the considered 87 instances are reported in~\ref{appendix2}.}

Since the imposed memory limits prevented the Time-Dependent Model from solving the remaining instances, from now on we restrict the comparison to the other three methods and consider the whole instance set described in Subsection~\ref{subsec:benchmarkinstances}.
In particular, in Table~\ref{tab:summary136}, we report the same information as in Table~\ref{tab:summary87}, but this time for the whole set of 136 instances, with respect to the three following methods: Sparse Model, Nonlinear Model and Bilevel Solver.

\begin{table}[H]
\begin{adjustbox}{width=1\textwidth, margin=1ex 1ex 1ex 1ex}
\begin{tabular}{r|rrrrrrrrr}
\rowcolor[HTML]{EFEFEF} 
\multicolumn{1}{c|}{\cellcolor[HTML]{EFEFEF}}        & \textbf{\#opt} & \textbf{LB}         & \textbf{UB}         & \textbf{$\text{gap}_{\text{LB}}$[{\footnotesize\%}]}    & \textbf{time[s]}        & \textbf{B\&C nodes}         & \textbf{$\text{LB}_\text{r}$}       & \textbf{$\text{gap}_{\text{UB}_\text{best}}$[{\footnotesize\%}]} & \textbf{$\text{gap}_{\text{LB}_\text{r}}$[{\footnotesize\%}]} \vspace*{1mm}\\ \hline
\cellcolor[HTML]{EFEFEF}\textbf{Sparse Model}          & \textbf{44}  & 266.8          & 911.4           & 46.3           & 5182           & 21919 & 253.3          & 1.89 & 72.1      \\
\cellcolor[HTML]{EFEFEF}\textbf{Nonlinear Model} & \textbf{52}  & {274.0} & {892.8} & {39.6} & {4735} & {744905} & {253.3} & 0.04 & 71.6\\     
\cellcolor[HTML]{EFEFEF}\textbf{Bilevel Solver}     & \textbf{26}  & 22.4& 915.9           & 71.3          & 5984          & 11892 & 3.80 & 3.76        & 97.7     
\end{tabular}
\end{adjustbox}
\caption{Summary of the computational performances of Sparse Model, Nonlinear Model and Bilevel Solver on the whole set of 136 instances.}
\label{tab:summary136}
\end{table}

The results on the whole set of instances confirm the computational dominance of the Nonlinear Model, which solves 52 out of the 136 instances to optimality and almost always provides solution values which are better than or equal to the ones found by the other methods, with an average gap of 0.04\%, computed with respect to the best known feasible solutions. Again, the number of branch-and-cut nodes reflects the faster resolution of the continuous relaxation of the Nonlinear Model at branching nodes.

We further provide three summary charts related to the three methods solving all the 136 instances. The first chart, shown in Figure~\ref{fig:1}, reports the number of instances solved to optimality within a given computational time. 
The second one, in Figure~\ref{fig:2}, shows the optimality gap at termination, i.e., what we called $\text{gap}_{\text{LB}}$[{\footnotesize\%}]. In particular, the plot shows the number of instances (on the vertical axis) for which the gap at termination is smaller than or equal to the value reported on the horizontal axis.
Figure~\ref{fig:3} reports the gap between the feasible solution at termination, and the best found feasible collapsed $k$-core among the three compared approaches, i.e., what we called $\text{gap}_{\text{UB}_\text{best}}$[{\footnotesize\%}]. Again, the chart shows the number of instances (on the vertical axis) for which the value of $\text{gap}_{\text{UB}_\text{best}}$ is smaller than or equal to the value reported on the horizontal axis.

\begin{figure}[h!]
    \centering
    \includegraphics[width=13cm,height=6.5cm]{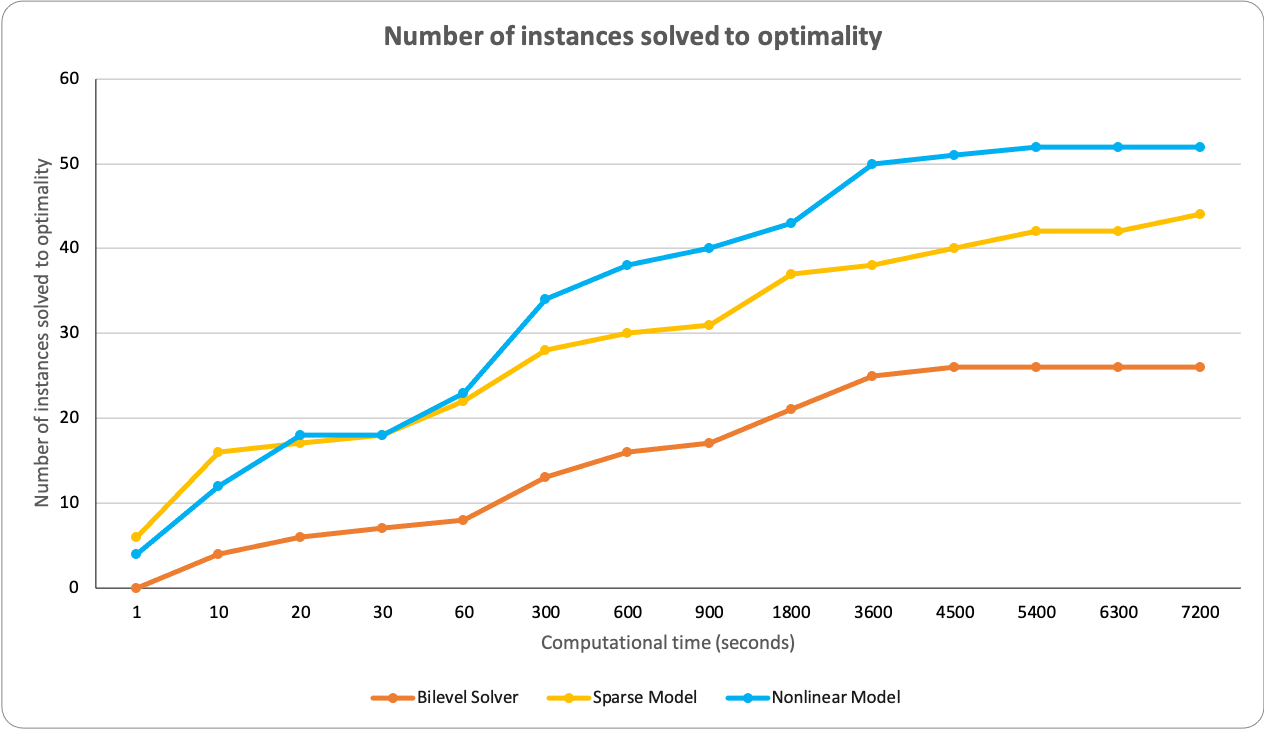}
    \caption{Cumulative chart of number of instances solved to optimality within a given computational time.}
    \label{fig:1}
\end{figure}

\begin{figure}[h!]
    \centering
    \subfloat[Cumulative chart of percentage gap at termination.]{\includegraphics[width=7cm,height=5.5cm]{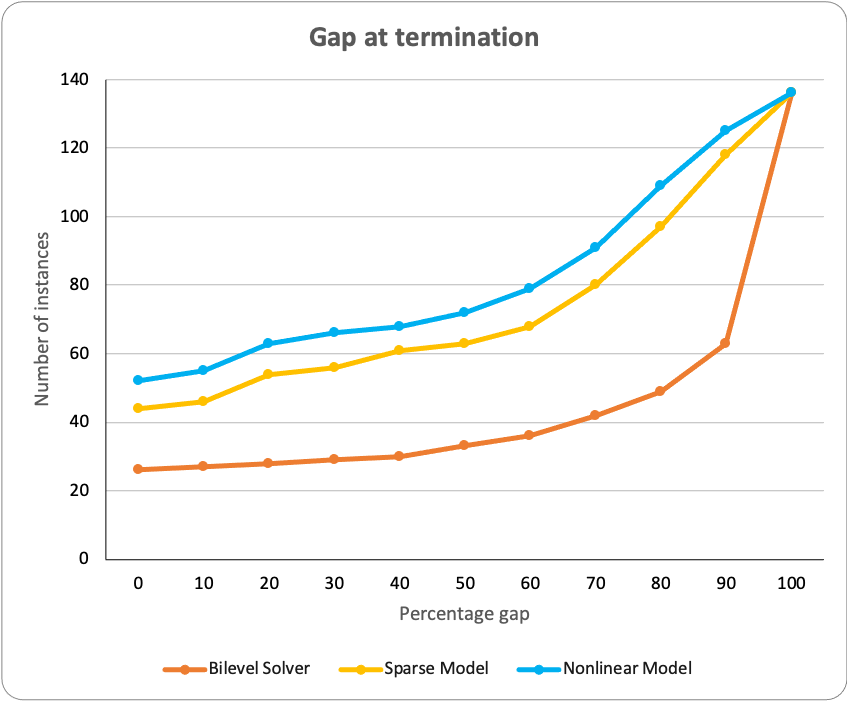}
    \label{fig:2}}\hfill
    \centering
    \subfloat[Cumulative chart of percentage gap with respect to the best feasible 
    solution at termination.]{\includegraphics[width=7cm,height=5.5cm]{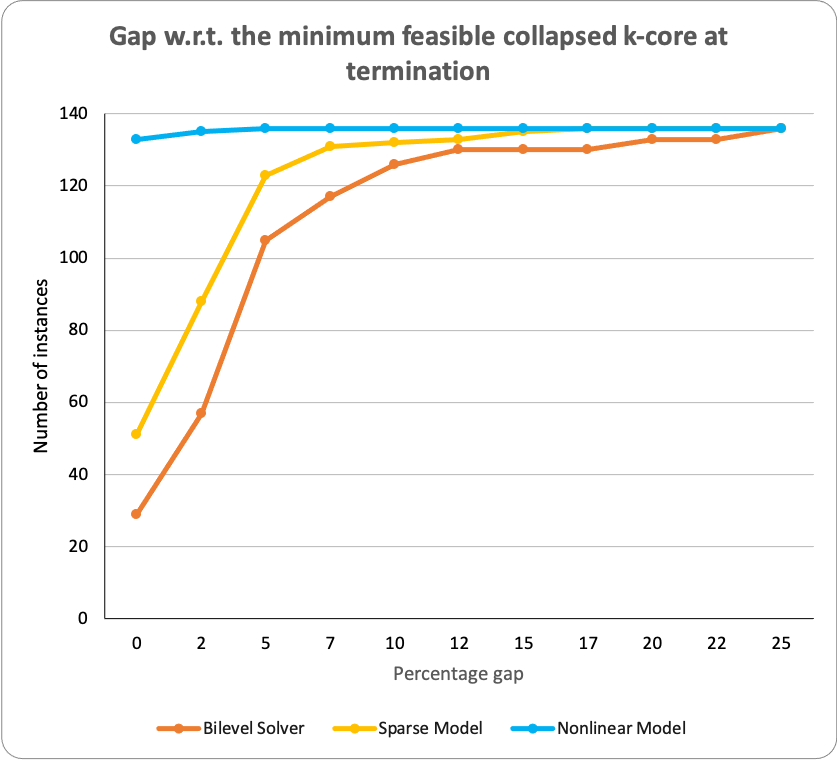}
    \label{fig:3}}
    \caption{Cumulative charts of two different percentage gaps.\vspace*{-3mm}}
\end{figure}
Overall, all the three charts show that the two approaches proposed in this paper are much more effective than the bilevel solver, which is largely outperformed by each of them. In the first chart (Figure~\ref{fig:1}), it can be observed that, only when the computational time is strictly below 20 seconds, the number of instances solved to optimality by the Sparse Model is slightly greater than the number of instances solved to optimality by the Nonlinear Model. However, when the considered time is larger than 20 seconds, the Nonlinear Model dominates the other two approaches. The chart in Figure~\ref{fig:2} demonstrates that the Nonlinear Model produces a gap at termination which is always lower than the one returned by the other approaches. Finally, Figure~\ref{fig:3} shows that, for more than 130 instances out of 136, the Nonlinear Model produces the best feasible solution, and, for the remaining 4 instances, the gap with respect to the best feasible solution found by one of the other models is below 3\%. Concerning the Sparse Model, it returns the best feasible solution for about 50 instances, and the gap for the remaining instances is less than 17\%. Finally, the bilevel solver finds the best feasible solution only for about 30 instances out of 136, and a gap with respect to the best one  can be as high as 25\%.

\revised{We also made an additional set of tests to verify the effectiveness of the valid inequalities presented in Section~\ref{sec:valid_ineq}. Specifically, we used model~\eqref{eq:singlewithcuts} as a benchmark and tested the following configurations:
\begin{inparaenum}[(i)]
    \item without any valid inequalities;
    \item with symmetry breaking inequalities \eqref{eq:dominance-bl}, \eqref{eq:symmetry-bl}, and lower bound constraint~\eqref{eq:lower_bound-bl1};
    \item with symmetry breaking inequalities \eqref{eq:dominance-bl}, \eqref{eq:symmetry-bl}, lower bound constraint~\eqref{eq:lower_bound-bl1}, followers' inequalities~\eqref{eq:nofollowers-bl} added to the formulation from the beginning, and constraints~\eqref{eq:generalnofollowers-bl} separated during the B\&C;
    \item with symmetry breaking inequalities \eqref{eq:dominance-bl}, \eqref{eq:symmetry-bl}, lower bound constraint~\eqref{eq:lower_bound-bl1}, and constraints~\eqref{eq:bigMcut} separated during the B\&C;
    \item with symmetry breaking inequalities \eqref{eq:dominance-bl}, \eqref{eq:symmetry-bl}, lower bound constraint~\eqref{eq:lower_bound-bl1}, and constraints~\eqref{eq:generalized_hcores_bil} separated during the B\&C;
    \item the Sparse Model with all the introduced valid inequalities (i.e., the model considered in the comparisons with the other models above).
    \end{inparaenum}
Results are summarized in Table~\ref{tab:InequalitiesPerformance} where we report, for each tested configuration: 
the number of optimal solutions found (\#opt), the average percentage gap at termination (gap[\%]), the average computing time (time[s]), the number of branch-and-cut nodes (B\&C nodes), and the number of added inequalities: \eqref{eq:dominance-bl}, \eqref{eq:symmetry-bl}, \eqref{eq:nofollowers-bl}, \eqref{eq:generalnofollowers-bl},  \eqref{eq:bigMcut}, and \eqref{eq:generalized_hcores_bil}, respectively.}

\hspace*{-1cm}\begin{table}[htbp]
  \centering
  \scalebox{0.55}{
    \begin{tabular}{r|rrrrrrrrrrr}
    \rowcolor[rgb]{ .937,  .937,  .937}       & \textbf{Model} & \textbf{\#opt} & \textbf{gap[\%]} & \textbf{time[s]} & \textbf{B\&C nodes} & \textbf{\#\eqref{eq:dominance-bl}} & \textbf{\#\eqref{eq:symmetry-bl}} & \textbf{\#\eqref{eq:nofollowers-bl}} & \textbf{\#\eqref{eq:generalnofollowers-bl}} & \textbf{\#\eqref{eq:bigMcut}} & \textbf{\#\eqref{eq:generalized_hcores_bil}} \\
    \midrule
    \cellcolor[rgb]{ .937,  .937,  .937} \textbf{(i)} & \textit{Model \eqref{eq:singlewithcuts}} & 24 & 82.4 & 6093 & 6388 & 0 & 0 & 0 & 0 & 0 & 0 \\
    \cellcolor[rgb]{ .937,  .937,  .937} \textbf{(ii)} & \textit{Model \eqref{eq:singlewithcuts} + \eqref{eq:dominance-bl} + \eqref{eq:symmetry-bl} + \eqref{eq:lower_bound-bl1}} & 27 & 56.2 & 5955 & 5356 & 1221 & 144 & 0 & 0 & 0 & 0 \\
    \cellcolor[rgb]{ .937,  .937,  .937} \textbf{(iii)} & \textit{Model \eqref{eq:singlewithcuts} + \eqref{eq:dominance-bl} + \eqref{eq:symmetry-bl} + \eqref{eq:lower_bound-bl1} + \eqref{eq:nofollowers-bl} + \eqref{eq:generalnofollowers-bl}} & 29 & 55.3 & 5847 & 6260 & 1221 & 144 & 303 & 242 & 0 & 0 \\
    \cellcolor[rgb]{ .937,  .937,  .937} \textbf{(iv)} & \textit{Model \eqref{eq:singlewithcuts} + \eqref{eq:dominance-bl} + \eqref{eq:symmetry-bl} + \eqref{eq:lower_bound-bl1} + \eqref{eq:bigMcut}} & 44 & 47.5 & 5132 & 16545 & 1221 & 144 & 0 & 0 & 131068 & 0 \\
    \cellcolor[rgb]{ .937,  .937,  .937} \textbf{(v)} & \textit{Model \eqref{eq:singlewithcuts} + \eqref{eq:dominance-bl} + \eqref{eq:symmetry-bl} + \eqref{eq:lower_bound-bl1} + \eqref{eq:generalized_hcores_bil}} & 29 & 53.0 & 5907 & 23465 & 1221 & 144 & 0 & 0 & 0 & 51895 \\
    \cellcolor[rgb]{ .937,  .937,  .937} \textbf{(vi)} & Sparse Model & 44 & 46.3 & 5182 & 21919 & 1221 & 144 & 303 & 183 & 91226 & 12631 \\
    \end{tabular}%
    }
    \caption{Performance of the Sparse model with several combinations of valid inequalities.}
  \label{tab:InequalitiesPerformance}%
\end{table}%

\revised{As expected, the number of optimal solutions found, as well as the average percentage gap at termination improve when adding further valid inequalities to the basic model~\eqref{eq:singlewithcuts}. We can also notice that the average time does not increase when increasing the number of considered valid inequalities. Inequalities~\eqref{eq:bigMcut} turn out to be the most effective, leading to 44 optimal solutions (and an average gap of 47.1\%), as many as the ones found by the complete Sparse Model (with an average gap of 46.3\%). The average number of added inequalities~\eqref{eq:dominance-bl}, \eqref{eq:symmetry-bl}, and \eqref{eq:nofollowers-bl} is constant for all configurations since they are not separated, but added to the model from the beginning, together with the lower bound constraint~\eqref{eq:lower_bound-bl1}.}

Finally, we provide two sensitivity analysis tables in which we group instances into three groups: \textit{Small} (with $n \le 100$), \textit{Medium} (with $100 <n \le1000$) and \textit{Large} (with $n>1000$), where $n$ is the number of nodes of the network after the pre-processing procedures. For each group, we consider three values of the budget $b \in \{3, 4, 5\}$, thus obtaining nine classes of instances. For each class, we report in Tables~\ref{tab:3} and \ref{tab:4} the total number of instances in the class (\#total).
In Table~\ref{tab:3}, for each method we report the number of instances solved to optimality (\#opt), the average computing time in seconds (time[s]) and the average number of nodes of the branch-and-cut tree (B\&C nodes); additionally, for the Sparse Model, we also report the average number of added cuts.
In Table~\ref{tab:4}, instead, we report for each class $\text{gap}_\text{LB}$[\%], $\text{gap}_{\text{LB}_\text{r}}$[\%], and $\text{gap}_{\text{UB}_\text{best}}$[\%].

The reported values show how increasing the value of the budget affects the different methods.
As expected, when the value of $b$ increases, the number of instances solved to optimality decreases for each method, while the average solution time and the average gap value increase.
Specifically, all the instances of the first class, i.e., small instances with $b=3$, are solved to optimality by both the Sparse and Nonlinear models, while the Bilevel Solver provides the optimal certified solution for all except two of them. The Nonlinear Model solves to optimality also all the instances of the second class, i.e., small instances with $b=4$. For the remaining classes, instead, no method is able to certify the optimality of all the instances from a given subclass. In particular, no optimal solution is found for any of the instances from the group ``large'', within the imposed time limit.
Overall, this analysis shows that the difficulty of an instance is highly affected by the budget value, in addition to the network size, but, for non-large instances, the Nonlinear Model confirms its superiority with respect to the remaining approaches.

\begin{table}[H]
\begin{adjustbox}{width=1\textwidth, margin=1ex 1ex 1ex 1ex}
  \begin{tabular}{rrr|rrrr|rrr|rrr}
  \rowcolor[HTML]{EFEFEF} 
  \multicolumn{3}{c|}{\cellcolor[HTML]{EFEFEF}\textbf{Class details}}      & \multicolumn{4}{c|}{\cellcolor[HTML]{EFEFEF}\textbf{Sparse Model}}         & \multicolumn{3}{c|}{\cellcolor[HTML]{EFEFEF}\textbf{Nonlinear Model}} & \multicolumn{3}{c}{\cellcolor[HTML]{EFEFEF}\textbf{Bilevel Solver}} \\ \hline
  \rowcolor[HTML]{EFEFEF} 
  \multicolumn{1}{c|}{\textbf{size}}       & \textbf{b} & \textbf{\#total} 
  & \textbf{\#opt} & \textbf{time[s]} & \textbf{B\&C nodes} & \textbf{cuts} & \textbf{\#opt} & \textbf{time[s]}    & \textbf{B\&C nodes}    & \textbf{\#opt}   & \textbf{time[s]}   & \textbf{B\&C nodes}  \\ \hline
  \multicolumn{1}{c|}{\cellcolor[HTML]{EFEFEF}}  & 3     & 14        
  & 14       & 102       & 5481     & 1719   & 14          & 27.6 & 26633        & 12         & 1623         & 10455      \\
  \multicolumn{1}{c|}{\cellcolor[HTML]{EFEFEF}}  & 4     & 12        
  & 11       & 1296       & 31205     & 5409   & 12          & 242 & 332382       & 7         & 3574         & 32115      \\
  \multicolumn{1}{c|}{\multirow{-3}{*}{\cellcolor[HTML]{EFEFEF}\textbf{Small}}} & 5     & 11        
  & 6       & 3820       & 53377     & 9226   & 10          & 1413           & 1960571        & 4         & 4805         & 45443      \\ \hline
  \multicolumn{1}{c|}{\cellcolor[HTML]{EFEFEF}}  & 3     & 17        
  & 9       & 4445       & 23730     & 16602      & 9           & 3961           & 353716       & 1         & 6884         & 11410 \\
  \multicolumn{1}{c|}{\cellcolor[HTML]{EFEFEF}}  & 4     & 17        
  & 3       & 6431       & 34573     & 19101   & 6           & 5544           & 1381271  & 1  & 6863     & 10923      \\
  \multicolumn{1}{c|}{\multirow{-3}{*}{\cellcolor[HTML]{EFEFEF}\textbf{Medium}}} & 5     & 17        
  & 1       & 6777       & 39951     & 16420      & 1           & 6938           & 2460756       & 1         & 6817         & 10669      \\ \hline
  \multicolumn{1}{c|}{\cellcolor[HTML]{EFEFEF}}  & 3     & 16        
  & 0       & 7200       & 5750     & 11795   & 0           & 7200           & 90644        & 0         & 7200         & 573       \\
  \multicolumn{1}{c|}{\cellcolor[HTML]{EFEFEF}}  & 4     & 16        
  & 0       & 7200       & 4393     & 9417     & 0           & 7200          & 80239        & 0         & 7200         & 492       \\
  \multicolumn{1}{c|}{\multirow{-3}{*}{\cellcolor[HTML]{EFEFEF}\textbf{Large}}} & 5     & 16        
  & 0       & 7200       & 6881     & 6206   & 0           & 7200           & 82350        & 0         & 7200         & 474       \\
  \end{tabular}
\end{adjustbox}
\caption{Sensitivity analysis showing the effect of different budget values on the number of instances solved to optimality, the solution time, the number of nodes and the number of cuts.}
\label{tab:3}
\end{table}

\begin{table}[H]
\begin{adjustbox}{width=1\textwidth, margin=1ex 1ex 1ex 1ex}
  \begin{tabular}{rrr|rrr|rrr|rrr}
  \rowcolor[HTML]{EFEFEF} 
  \multicolumn{3}{c|}{\cellcolor[HTML]{EFEFEF}\textbf{Class details}}         & \multicolumn{3}{c|}{\cellcolor[HTML]{EFEFEF}\textbf{Sparse Model}}       & \multicolumn{3}{c|}{\cellcolor[HTML]{EFEFEF}\textbf{Nonlinear Model}}         & \multicolumn{3}{c}{\cellcolor[HTML]{EFEFEF}\textbf{Bilevel Solver}}  \\ \hline
  \rowcolor[HTML]{EFEFEF} 
  \multicolumn{1}{c|}{\cellcolor[HTML]{EFEFEF}\textbf{size}}           & \textbf{b} & \textbf{\#Total} 
  & \textbf{$\text{gap}_\text{LB}$[{\footnotesize\%}]} & \textbf{$\text{gap}_{\text{LB}_\text{r}}$[{\footnotesize\%}]} & \textbf{$\text{gap}_{\text{UB}_\text{best}}$[{\footnotesize\%}]} & \textbf{$\text{gap}_\text{LB}$[{\footnotesize\%}]} & \textbf{$\text{gap}_{\text{LB}_\text{r}}$[{\footnotesize\%}]} & \textbf{$\text{gap}_{\text{UB}_\text{best}}$[{\footnotesize\%}]}  & \textbf{$\text{gap}_\text{LB}$[{\footnotesize\%}]} & \textbf{$\text{gap}_{\text{LB}_\text{r}}$[{\footnotesize\%}]} & \textbf{$\text{gap}_{\text{UB}_\text{best}}$[{\footnotesize\%}]}  \vspace*{1mm}\\ \hline
  \multicolumn{1}{c|}{\cellcolor[HTML]{EFEFEF}}      & 3     & 14        
  & 0.00          & 75.1          & 0.00       & 0.00          & 75.1         & 0.00        & 7.40         & 92.5          & 0.28 \\
  \multicolumn{1}{c|}{\cellcolor[HTML]{EFEFEF}}      & 4     & 12        
  & 6.40        & 82.4          & 0.11       & 0.00          & 82.4          & 0.00         & 20.2        & 95.6          & 1.28 \\
  \multicolumn{1}{c|}{\multirow{-3}{*}{\cellcolor[HTML]{EFEFEF}\textbf{Small}}} & 5     & 11        
  & 34.5         & 87.7          & 1.40     & 3.10        & 87.7         & 0.00        & 34.4        & 97.2          & 2.19 \\ \hline
  \multicolumn{1}{c|}{\cellcolor[HTML]{EFEFEF}}      & 3     & 17       
  & 22.4        & 57.3         & 0.46     & 22.6        & 57.1          & 0.17      & 78.0       & 97.0          & 5.35 \\
  \multicolumn{1}{c|}{\cellcolor[HTML]{EFEFEF}}      & 4     & 17       
  & 50.5        & 65.5          & 2.52       & 34.2        & 64.9          & 0.12       & 84.2        & 97.9          & 6.89 \\
  \multicolumn{1}{c|}{\multirow{-3}{*}{\cellcolor[HTML]{EFEFEF}\textbf{Medium}}} & 5     & 17        
  & 66.5        & 73.3          & 3.43      & 56.5         & 72.5          & 0.00        & 85.8        & 99.0          & 7.87 \\ \hline
  \multicolumn{1}{c|}{\cellcolor[HTML]{EFEFEF}}      & 3     & 16       
  & 64.0        & 64.0         & 2.14       & 63.0        & 63.1          & 0.00      & 98.8        & 99.7         & 2.06 \\
  \multicolumn{1}{c|}{\cellcolor[HTML]{EFEFEF}}      & 4     & 16        
  & 73.4        & 73.4          & 2.69       & 72.3        & 72.4          & 0.00        & 99.1        & 99.8          & 2.62 \\
  \multicolumn{1}{c|}{\multirow{-3}{*}{\cellcolor[HTML]{EFEFEF}\textbf{Large}}} & 5     & 16        
  & 79.8        & 79.8          & 3.38     & 78.9        & 78.9          & 0.00        & 99.5        & 99.9          & 3.22
  \end{tabular}
\end{adjustbox}
\caption{Sensitivity analysis showing the effect of different budget values on the gaps.}
\label{tab:4}
\end{table}

\section{Conclusion}\label{sec:conclusion}
Identifying the most critical users, in terms of network engagement, is a compelling topic in social networks analysis. Users who leave a community potentially affect the cardinality of its $k$-core, i.e., the maximal induced subgraph of the network with minimum degree at least $k$. In this paper, we presented different mathematical programming formulations of the Collapsed $k$-Core Problem, consisting in finding the $b$ nodes of a graph the removal of which leads to the $k$-core of minimal cardinality.
We started with a time-indexed compact formulation which models the cascade effect following the removal of the $b$ nodes. Then, we proposed two different bilevel programming models of the problem. In both of them, the leader aims to minimize the cardinality of the $k$-core obtained by removing exactly $b$ nodes. The follower wants to detect the $k$-core obtained after the decision of the leader on the $b$ nodes to remove, i.e., finding the maximal subgraph of the new graph where all the nodes have degree at least $k$. The two formulations differ in the way the follower's problem is modeled. In the first one, the lower level is an ILP model. It is solved through a Benders-like decomposition approach. In the second bilevel formulation, the lower level is modeled through LP, which we dualized in order to end up with a single-level formulation. Preprocessing procedures, and valid inequalities have been further introduced to enhance the proposed formulations.

In order to evaluate the proposed formulations we tested different existing instances, showing the superiority of the single-level reformulation of the second bilevel model. We further compared the approaches with the general purpose bilevel solver proposed in \citep{ljubic2017} which is outperformed by our problem-specific solution methods. \revised{The efficiency of the proposed valid inequalities is also demonstrated by performing additional computational experiments.}

Interesting directions for future research are related, for example, to applying our approaches to the $k$-core minimization problem through edge deletion \citep{zhu2018}. Furthermore, other related problems, like the so-called Anchored $k$-core Problem \citep{bhawalkar2015} may benefit from the proposed solution techniques. 

\bibliographystyle{elsarticle-harv}
\bibliography{collapsedkcore}
\newpage
\appendix
\revised{\section{Notation}\label{appendix1}
\revised{Table~\ref{tab:parameters} reports a list of mathematical notations used throughout the paper.}
\begin{table}[H]
    \centering
    \begin{adjustbox}{width=1\textwidth, margin=1ex 1ex 1ex 1ex}
    \begin{tabular}{r|l}
        \textbf{Notation} & \textbf{Definition} \\
        \hline
    \rowcolor[HTML]{EFEFEF}
    $V$ & set of nodes of graph $\mathcal{G}$\\
    $E$ & set of edges of graph $\mathcal{G}$\\
    \rowcolor[HTML]{EFEFEF}
    $N(i) := \{j\in V~:~\{i,j\}\in E\}$ & set of neighbors of node $i\in V$ \\
    $\delta(\mathcal{G}):=\min_{i \in V} |N(i)| $ & minimum degree of graph $\mathcal{G}$ \\
    \rowcolor[HTML]{EFEFEF}
    $\mathcal{G}[S]:=(S,E_S)~|~E_S=\{ \{i,j\}\in E:i,j\in S\}$ & subgraph of graph $\mathcal{G}$ induced by the set of nodes $S$\\
    $\mathcal{G}\backslash S := \mathcal{G}[V\backslash S]$ & subgraph of graph $\mathcal{G}$ induced by the set of nodes $V\backslash S$\\
    \rowcolor[HTML]{EFEFEF} 
    $\mathcal{Y}$ & set of incident vectors of any $k$-subcore of graph $\mathcal{G}$ (Eq.~\eqref{eq:kcore1Y}) \\
    $\mathcal{K} := \{K \subseteq V~:~\delta(\mathcal{G}[K]) \geq k\}$ & set of the $k$-subcores $K$ of graph $\mathcal{G}$\\
    \rowcolor[HTML]{EFEFEF} 
    $C_k(\mathcal{G}):=\arg\max_{K \in \mathcal{K}} |K|$ & $k$-core of graph $\mathcal{G}$\\
    $\mathcal{W}$ & set of incident vectors of any interdiction policy (Eq.~\eqref{eq:Wset}) \\
    \rowcolor[HTML]{EFEFEF} 
    $\Omega:=\{W\subseteq V ~:~|W| = b\}$ & set of the interdiction policies $W$\\
    $J_u := \{v\in V : v\notin C_k(\mathcal{G}\backslash\{u\})\}$ & set of the followers of node $u$, including $u$ itself\\
    \rowcolor[HTML]{EFEFEF} 
    $J_S := \{j\in V : j\notin C_k(\mathcal{G}\backslash S) \}$ & set of the followers of the nodes in the set $S$ including $S$ itself\\
    \end{tabular}
    \end{adjustbox}
    \caption{List of recurring mathematical notations.}
    \label{tab:parameters}    
\end{table}
}
\revised{\section{Cumulative charts on a set of 87 instances}\label{appendix2}

We report here three cumulative charts related to the four methods (Time Dependent Model, Sparse Model, Nonlinear Model, and Bilevel Solver) solving 87 instances. The first chart in Figure~\ref{fig:1a} reports the number of instances solved to optimality within a given computational time. The second one, in Figure~\ref{fig:2a}, shows the number of instances solved with a gap at termination which is smaller than or equal to the value reported on the horizontal axis. The last one, in Figure~\ref{fig:3a}, shows the number of instances for which the value of $\text{gap}_{\text{UB}_\text{best}}$[{\footnotesize\%}] (the gap between the feasible solution at termination, and the best found feasible solution among the four compared approaches) is
smaller than or equal to the value reported on the horizontal axis.}
\begin{figure}[H]
    \centering
    \includegraphics[width=10cm,height=5.5cm]{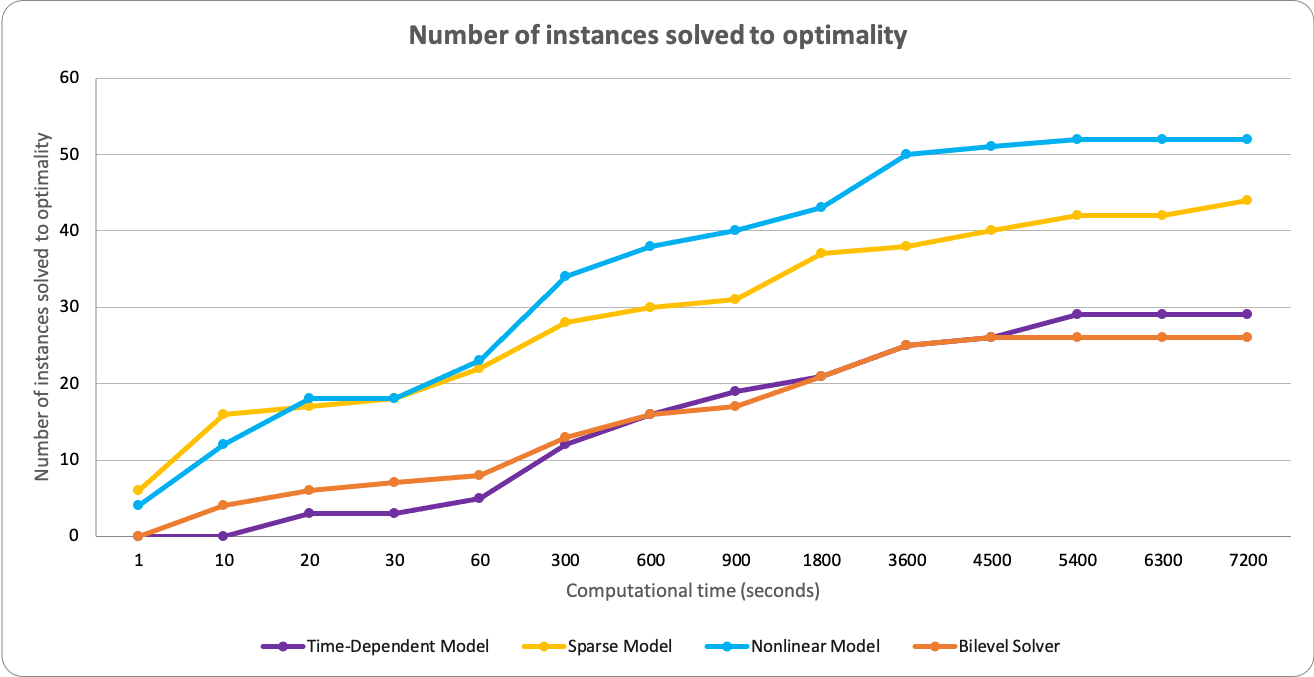}
    \caption{Cumulative chart of number of instances out of 87 solved to optimality within a given computational time.}
    \label{fig:1a}
\end{figure}
\begin{figure}[H]
    \centering
    \subfloat[Cumulative chart of percentage gap at termination.]{\includegraphics[width=7cm,height=5.5cm]{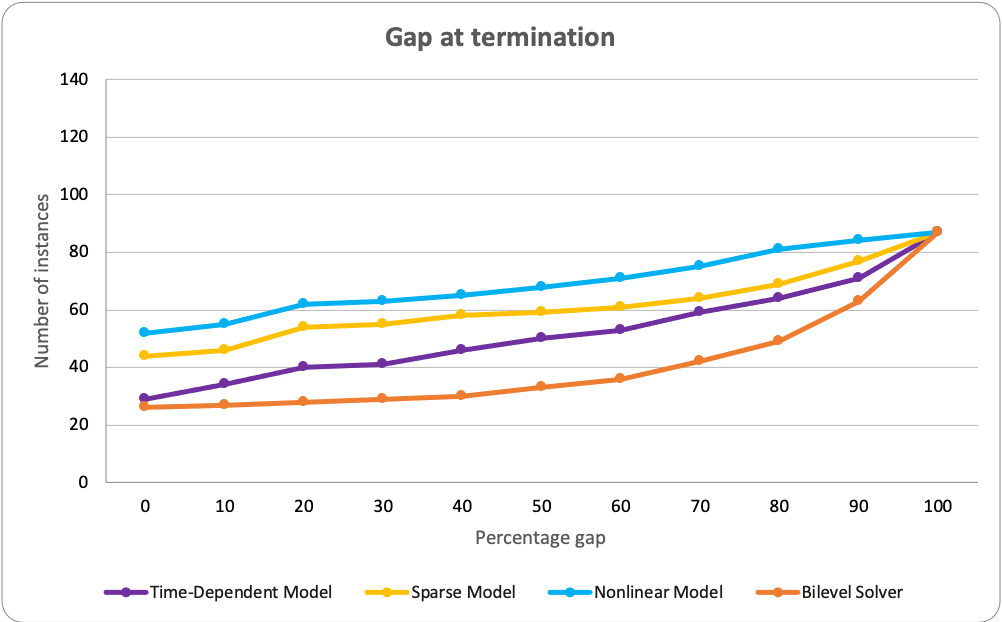}
    \label{fig:2a}}\hfill
    \centering
    \subfloat[Cumulative chart of percentage gap with respect to the best feasible $k$-core at termination.]{\includegraphics[width=7cm,height=5.5cm]{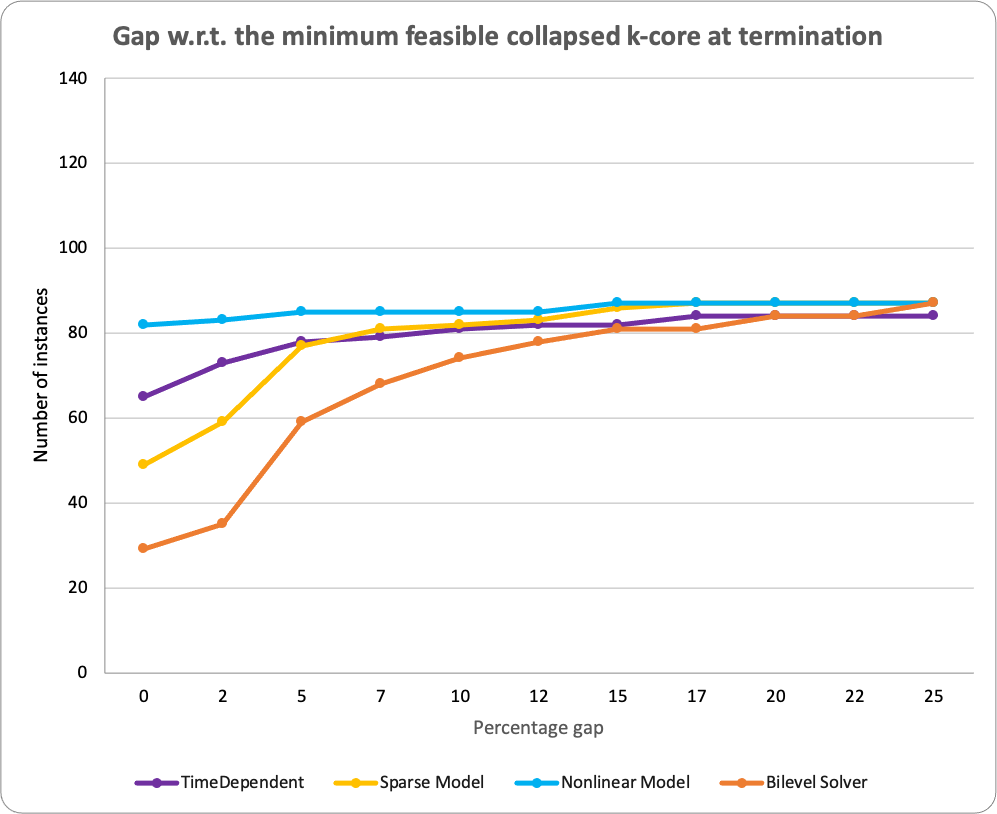}
    \label{fig:3a}}
    \caption{Cumulative charts of two different percentage gaps.\vspace*{-3mm}}
\end{figure}

\end{document}